\theoremstyle{definition}
\newtheorem{thm}{Theorem}[section]
\newtheorem{lem}[thm]{Lemma}
\newtheorem{cnv}[thm]{Convention}
\def\T{\bold T}
\def\ccite#1{\textcolor{Red}{\cite{#1}}}
\def\amlg#1#2#3{\mathbb Z_{#1} \Asterisk_{\mathbb Z_{#3}} \mathbb Z_{#2}}
\def\free#1#2{\mathbb Z_{#1} \Asterisk \mathbb Z_{#2}}
\def\rk{\text{\bf rk}}
\numberwithin{equation}{section}
\begin{document}

\title[K-\lowercase{theory of} r\lowercase{otation} a\lowercase{lgebra} 
c\lowercase{rossed} p\lowercase{roducts}]
{\Large\rm K-\lowercase{theory of} r\lowercase{otation} a\lowercase{lgebra} 
c\lowercase{rossed} \\ p\lowercase{roducts} \lowercase{by} a\lowercase{malgamated} p\lowercase{roducts} \\ \lowercase{of} f\lowercase{inite} c\lowercase{yclic} g\lowercase{roups}}
\author{S. W\lowercase{alters}}
\date{}
\address{Department of Mathematics \& Statistics, University  of Northern B.C., Prince George, B.C. V2N 4Z9, Canada.}
\email[]{walters@unbc.ca}
\subjclass[2000]{46L80,\ 46L40,\ 19K14,\ 19L10,\ 13D15,\ 19Axx
} 
\keywords{C*-algebras, K-theory, automorphisms, rotation algebras, unbounded traces, Chern characters}
\urladdr{http://hilbert.unbc.ca}

\begin{abstract}
The $K$-groups of the crossed product of the rotation C*-algebra $A_\theta$ by free and amalgamated products of the cyclic groups $\mathbb Z_n$, for $n=2,3,4,6$, are calculated. The actions here arise from the canonical actions of these groups on the rotation algebra under the flip, cubic, Fourier, and hexic automorphisms, respectively. An interesting feature in this study is that although the inclusion $A_\theta \to A_\theta \rtimes \mathbb Z_n$ induces injective maps on their $K_0$-groups, the same is not the case for the inclusions $A_\theta \rtimes \mathbb Z_d \to A_\theta \rtimes \mathbb Z_n$ for $2\le d < n \le 6$ and $d|n$, which we endeavor to calculate. Further, while for free products $K_1(A_\theta \rtimes [\free{m}{n}]) = 0$, for amalgamated products $K_1(A_\theta \rtimes [\amlg{m}nd]) = \mathbb Z^k$ is non-vanishing ($k=1,2$).
\end{abstract}

\maketitle

\begin{quote}
\begin{quote}
{\Small \tableofcontents}
\end{quote}
\end{quote}

\newpage

{\Large\section{Introduction}}
The rotation C*-algebra $A_\theta$ is the universal C*-algebra generated by two unitaries
$U,V$ satisfying the commutation relation $VU=e^{2\pi i\theta}UV$.  There are canonical actions of the finite cyclic groups $\Bbb Z_2, \Bbb Z_3, \Bbb Z_4, \Bbb Z_6$ on $A_\theta$ (where $\Bbb Z_n := \Bbb Z/n\Bbb Z$). These actions are given, respectively, by the flip $\phi$, cubic $\alpha$, Fourier $\sigma$, and hexic $\rho$, transforms:
\begin{align}
\phi(U) &= U^{-1},  \qquad \qquad \ \ \ \phi(V) = V^{-1}   \\
\alpha(U) &= e^{-\pi i\theta}U^{-1}V,  \qquad \ \alpha(V) = U^{-1} \\
\sigma(U) &= V^{-1}, \qquad\qquad\ \ \  \sigma(V) = U \\
\rho(U) &= V, \qquad\qquad\qquad \rho(V) = e^{-\pi i\theta} U^{-1}V. 
\end{align}
Of course, $\phi=\sigma^2 = \rho^3$, and $\alpha = \rho^2$. The C* and $K$-theoretic structure of these automorphism have been extensively studied in \ccite{BEEKa}, \ccite{BEEKb}, \ccite{BK}, \ccite{BW}, \ccite{ELPW}, \ccite{AK},
\ccite{SWa}, \ccite{SWb}, \ccite{SWd}, \ccite{SWe}, \ccite{SWorbifolds}.

For convenience, we shall introduce the notation
\[
\Bbb Z_{m,n} = \free{m}n, \qquad 
\Bbb Z_{m,n;d} = \amlg{m}nd
\]
for the free and amalgamated products. It is well-known that $\Bbb Z_{2,2} \cong \Bbb Z \rtimes \Bbb Z_2$ (the usual semi-direct product). It is also known that $\mathbb Z_{2,3} \cong \text{PSL}(2,\Bbb Z)$ and that $\mathbb Z_{4,6;2} \cong \text{SL}(2,\Bbb Z)$, both nonamenable groups.  (See \ccite{PH}, II.28, III.14.)
\vskip5pt

\begin{cnv}
We make free use of the notation $e(x) := e^{2\pi ix}$ for real $x$.
\end{cnv}
\vskip5pt

In joint work with Echterhoff, L\"uck, and Phillips \ccite{ELPW}, it was shown that for all real parameters $\theta$ one has 
\[
K_0(A_\theta\rtimes \Bbb Z_3) = \Bbb Z^8,\qquad 
K_0(A_\theta\rtimes \Bbb Z_4) = \Bbb Z^9,\qquad 
K_0(A_\theta\rtimes \Bbb Z_6) = \Bbb Z^{10},
\]
and that $K_1(A_\theta\rtimes \Bbb Z_n) = 0$ for $n=3,4,6$.  Before that, Kumjian \ccite{AK} already showed that for the flip case $K_0(A_\theta\rtimes \Bbb Z_2) = \Bbb Z^6$ and $K_1(A_\theta\rtimes \Bbb Z_2)=0$. 

For simplicity, we shall write
\[
K_0(A_\theta\rtimes \Bbb Z_n) = \Bbb Z^{r(n)}
\]
where $r(n)$ is the corresponding rank.  It was also shown in \ccite{ELPW} that the crossed product $A_\theta\rtimes \Bbb Z_n$ is approximately finite (AF) dimensional for $n=3,4,6$ and any irrational $\theta$. The AF result for the $n=2$ case was previously proved by Bratteli and Kishimoto \ccite{BK}.  The AF result and the $K$-groups for the Fourier case ($n=4$) was proved by the author in 2004 \ccite{SWe} for a dense $G_\delta$ set of parameters $\theta$, and in \ccite{ELPW} this is shown to hold for all irrational $\theta$.

\vskip5pt 

In this paper we prove Theorems \ref{thmfree} and \ref{thmamalg}, which give the $K$-groups of the crossed product of the rotation algebra by the canonical actions of the free products $\free{m}n$ and the amalgamated products 
\[
\amlg442, \qquad  \amlg462, \qquad  \amlg662, \qquad  \amlg663.
\]
This is done by proving Theorems \ref{Kexact} and \ref{KfreeExact} and using the results of \ccite{ELPW} and applying Natsume's exact sequence \ccite{TN}.

Since the free product groups involved here are nonnuclear (and nonamenable), with the exception of $\free22$, all the crossed products considered here, both the unreduced $C^*(A_\theta, \Bbb Z_{m,n})$ and the reduced $C_r^*(A_\theta, \Bbb Z_{m,n}) \equiv A_\theta\rtimes \Bbb Z_{m,n} $, are not nuclear since $C_r^*(\Bbb Z_{m,n})$ is not nuclear for $(m,n)\not=(2,2)$.  However, all these crossed products are K-nuclear on account of $\free{m}n$ being K-amenable and $A_\theta$ nuclear (see \ccite{BB}, 20.10.2).  Further, the K-amenability of $\Bbb Z_{m,n}$ implies that one has an isomorphism $K_0(C^*(A_\theta, \Bbb Z_{m,n})) \cong K_0(C_r^*(A_\theta, \Bbb Z_{m,n}))$, so that the K-groups obtained here are the same for the reduced and unreduced crossed products alike, thanks to a result of Cuntz \ccite{JC}.  The same applies to crossed products by the amalgamated groups $C^*(A_\theta, \Bbb Z_{m,n;d})$.

\medskip

We now state our main results.
\medskip

\begin{thm} \label{thmfree}
Under the canonical actions of the groups $\Bbb Z_n$ ($n=2,3,4,6$) of the rotation C*-algebra $A_\theta$, and for all parameters $\theta$ one has the $K_0$-groups
\begin{xalignat*}{3}
K_0(A_\theta \rtimes \Bbb Z_{2,2}) &= \Bbb Z^{12} \quad & 
K_0(A_\theta \rtimes \Bbb Z_{3,3}) &= \Bbb Z^{16}  & 
K_0(A_\theta \rtimes \Bbb Z_{4,4}) &= \Bbb Z^{18}  
\\
K_0(A_\theta \rtimes \Bbb Z_{2,3}) &= \Bbb Z^{14} \quad & 
K_0(A_\theta \rtimes \Bbb Z_{3,4}) &= \Bbb Z^{17} & 
K_0(A_\theta \rtimes \Bbb Z_{4,6}) &= \Bbb Z^{19}
\\ 
K_0(A_\theta \rtimes \Bbb Z_{2,4}) &= \Bbb Z^{15} \quad & 
K_0(A_\theta \rtimes \Bbb Z_{3,6}) &= \Bbb Z^{18} \quad & 
K_0(A_\theta \rtimes \Bbb Z_{6,6}) &= \Bbb Z^{20}
\\
K_0(A_\theta \rtimes \Bbb Z_{2,6}) &= \Bbb Z^{16}  \quad & 
 &  \quad &  & 
\end{xalignat*}
and $K_1(A_\theta \rtimes \Bbb Z_{m,n}) = 0$ in each case.
\end{thm}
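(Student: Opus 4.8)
The plan is to feed the $K$-theory data of \cite{ELPW} and \cite{AK} into Natsume's six-term exact sequence \cite{TN} for a crossed product by a free product of groups, which is the content of Theorem \ref{KfreeExact} below. For $G_1 = \mathbb Z_m$ and $G_2 = \mathbb Z_n$ acting on $A_\theta$ through the canonical automorphisms, this sequence takes the form
\[
\cdots \to K_j(A_\theta) \xrightarrow{\ \alpha_j\ } K_j(A_\theta \rtimes \mathbb Z_m) \oplus K_j(A_\theta \rtimes \mathbb Z_n) \xrightarrow{\ \beta_j\ } K_j(A_\theta \rtimes \mathbb Z_{m,n}) \xrightarrow{\ \partial\ } K_{j-1}(A_\theta) \to \cdots,
\]
where $\alpha_j = \big((\iota_m)_*,\, -(\iota_n)_*\big)$ is induced by the inclusions $\iota_m\colon A_\theta \hookrightarrow A_\theta \rtimes \mathbb Z_m$ and $\iota_n\colon A_\theta \hookrightarrow A_\theta \rtimes \mathbb Z_n$, and $\beta_j$ by the inclusions of the two factor crossed products into $A_\theta \rtimes \mathbb Z_{m,n}$. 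Since $\free{m}n$ is $K$-amenable (as recalled above), this sequence is the same for the reduced and the full crossed product, so no choice need be made. I would then substitute the inputs $K_0(A_\theta) = \mathbb Z^2$ and $K_1(A_\theta) = \mathbb Z^2$, and --- by \cite{AK} for $n = 2$ and \cite{ELPW} for $n = 3,4,6$ --- $K_0(A_\theta \rtimes \mathbb Z_n) = \mathbb Z^{r(n)}$ with $\big(r(2),r(3),r(4),r(6)\big) = (6,8,9,10)$ together with $K_1(A_\theta \rtimes \mathbb Z_n) = 0$, all valid for every real $\theta$.

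Because $K_1(A_\theta \rtimes \mathbb Z_m) = K_1(A_\theta \rtimes \mathbb Z_n) = 0$, the sequence collapses to
\[
0 \to K_1(A_\theta \rtimes \mathbb Z_{m,n}) \xrightarrow{\ \partial\ } K_0(A_\theta) \xrightarrow{\ \alpha_0\ } K_0(A_\theta \rtimes \mathbb Z_m) \oplus K_0(A_\theta \rtimes \mathbb Z_n) \xrightarrow{\ \beta_0\ } K_0(A_\theta \rtimes \mathbb Z_{m,n}) \xrightarrow{\ \partial\ } K_1(A_\theta) \to 0,
\]
so that $K_1(A_\theta \rtimes \mathbb Z_{m,n}) \cong \ker(\alpha_0)$ and, $K_1(A_\theta) = \mathbb Z^2$ being free, $K_0(A_\theta \rtimes \mathbb Z_{m,n}) \cong \operatorname{coker}(\alpha_0) \oplus \mathbb Z^2$. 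Everything now reduces to two facts about a single inclusion $\iota_n\colon A_\theta \hookrightarrow A_\theta \rtimes \mathbb Z_n$: that $(\iota_n)_*$ is \emph{injective} on $K_0$, and that its cokernel in $K_0(A_\theta \rtimes \mathbb Z_n)$ is \emph{torsion-free} (equivalently, that $(\iota_n)_* K_0(A_\theta)$ is a direct summand of $K_0(A_\theta \rtimes \mathbb Z_n)$). For injectivity I would use Takai duality: under the isomorphism $(A_\theta \rtimes \mathbb Z_n)\rtimes\widehat{\mathbb Z_n} \cong A_\theta \otimes M_n$, the composite $K_0(A_\theta) \xrightarrow{(\iota_n)_*} K_0(A_\theta \rtimes \mathbb Z_n) \to K_0(A_\theta \otimes M_n) = K_0(A_\theta)$ carries a projection class $[p]$ to $\sum_{k=0}^{n-1}[\beta^k p]$, where $\beta$ generates the action; since the flip, cubic, Fourier, and hexic transforms act trivially on $K_0(A_\theta)$ for every $\theta$ (for $\theta$ irrational this follows from invariance of the canonical trace; see \cite{ELPW}), this composite is multiplication by $n$, hence injective on the torsion-free group $K_0(A_\theta) \cong \mathbb Z^2$. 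Injectivity of $(\iota_n)_*$ forces $\alpha_0$ injective, so $\ker(\alpha_0) = 0$ and $K_1(A_\theta \rtimes \mathbb Z_{m,n}) = 0$, giving the second assertion of the theorem.

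I would then obtain the $K_0$ ranks as follows. Torsion-freeness of $\operatorname{coker}(\iota_n)_*$ provides a retraction $\rho\colon K_0(A_\theta\rtimes\mathbb Z_n) \to K_0(A_\theta)$ of $(\iota_n)_*$, and the homomorphism $(a,b)\mapsto a + (\iota_m)_*\rho(b)$ then maps $K_0(A_\theta\rtimes\mathbb Z_m)\oplus K_0(A_\theta\rtimes\mathbb Z_n)$ onto $K_0(A_\theta\rtimes\mathbb Z_m) = \mathbb Z^{r(m)}$, annihilates $\operatorname{im}(\alpha_0)$, and has kernel whose quotient by $\operatorname{im}(\alpha_0)$ is $\operatorname{coker}(\iota_n)_* \cong \mathbb Z^{r(n)-2}$; hence $\operatorname{coker}(\alpha_0) \cong \mathbb Z^{r(m)+r(n)-2}$ and $K_0(A_\theta\rtimes\mathbb Z_{m,n}) \cong \mathbb Z^{r(m)+r(n)}$. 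Inserting the values $r(2),r(3),r(4),r(6) = 6,8,9,10$ reproduces the twelve entries of the table (for instance $6+6 = 12$ for $\mathbb Z_{2,2}$, $6+8 = 14$ for $\mathbb Z_{2,3}$, $8+10 = 18$ for $\mathbb Z_{3,6}$, $9+10 = 19$ for $\mathbb Z_{4,6}$, $10+10 = 20$ for $\mathbb Z_{6,6}$). The one step that is not purely formal --- and the main obstacle --- is establishing that $\operatorname{coker}(\iota_n)_*$ is torsion-free for $n = 2,3,4,6$ and all $\theta$: this amounts to locating the image of $K_0(A_\theta) = \mathbb Z^2$ (spanned by the unit class and a Rieffel-type projection class) inside the explicit $\mathbb Z$-bases of $K_0(A_\theta \rtimes \mathbb Z_n)$ computed in \cite{AK} (for $n=2$) and \cite{ELPW} (for $n=3,4,6$) --- whose generators are the unit, Rieffel-type projections, and projections built from the finite-group unitaries --- and checking that this image is a direct summand of $\mathbb Z^{r(n)}$. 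This is where the fine structure of the finite-order automorphism genuinely enters; once it is in hand, the conclusion of Theorem \ref{thmfree} is immediate, and the argument runs parallel to the amalgamated case of Theorem \ref{thmamalg}.
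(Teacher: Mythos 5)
Your skeleton is the same as the paper's: feed the inputs $K_0(A_\theta\rtimes\Bbb Z_n)=\Bbb Z^{r(n)}$, $K_1(A_\theta\rtimes\Bbb Z_n)=0$ from \cite{AK} and \cite{ELPW} into Natsume's sequence \cite{TN} with $N$ trivial, observe that the sequence collapses, and reduce everything to properties of the single inclusions $i\colon A_\theta\to A_\theta\rtimes\Bbb Z_n$. Two of your ingredients are fine and even pleasant variants of what the paper does: the Takai-duality/averaging argument (composite equals $\sum_k\beta^k_*=n\cdot\mathrm{id}$ since the flip, cubic, Fourier and hexic transforms act trivially on $K_0(A_\theta)$) gives injectivity of $i_*$ without any character computation, and hence $K_1(A_\theta\rtimes\Bbb Z_{m,n})=\ker(\alpha_0)=0$; and your retraction argument for passing from one summand inclusion to the map into the direct sum is a legitimate, slightly more economical substitute for the paper's Lemma \ref{summandlemma}.

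The genuine gap is that you never prove the one statement that carries all the arithmetic weight: that $i_*K_0(A_\theta)$ is a direct summand of $K_0(A_\theta\rtimes\Bbb Z_n)$ (equivalently that $\operatorname{coker}(i_*)$ is torsion free) for $n=2,3,4,6$. You correctly isolate it as ``the main obstacle,'' but then defer it; without it the exact sequence only yields $K_0(A_\theta\rtimes\Bbb Z_{m,n})\cong\operatorname{coker}(\alpha_0)\oplus\Bbb Z^2$ with $\operatorname{coker}(\alpha_0)$ of rank $r(m)+r(n)-2$ but possibly with torsion, so the asserted identification with $\Bbb Z^{r(m)+r(n)}$ does not follow. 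This missing step is exactly Theorem \ref{KfreeExact}, and it is where the paper does real work: it expresses $\T_n i_*[1]$ and $\T_n i_*[e_\theta]$ as explicit integral combinations of the basis vectors of the ranges of the Chern--Connes characters $\T_2,\T_4,\T_6,\T_3$ and completes them to $\Bbb Z$-bases (ends of Sections 3, 4, and 6). Note also that those bases come from the character tables of \cite{SWa}, \cite{SWb}, \cite{BW}, not from \cite{AK} or \cite{ELPW} as your sketch suggests, and that since the theorem is claimed for all $\theta$ your appeal to uniqueness of the trace for the triviality of the action on $K_0$ should be supplemented (as in \cite{ELPW}) for rational parameters. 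So: right strategy, a nice alternative proof of the $K_1$ vanishing, but the $K_0$ statement rests on an unproven (and genuinely computational) direct-summand claim.
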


\medskip

\begin{thm} \label{thmamalg}
Under the canonical actions of the groups $\Bbb Z_n$ ($n=2,3,4,6$) of the rotation C*-algebra $A_\theta$, and for all parameters $\theta$, the $K$-groups of the crossed product algebra $A_\theta \rtimes \Bbb Z_{m,n;d}$ by the amalgamated product groups, are as follows
\[
K_0( A_\theta \rtimes \Bbb Z_{4,4;2}) = \Bbb Z^{13}, \qquad
K_1( A_\theta \rtimes \Bbb Z_{4,4;2}) = \Bbb Z
\]
\[
K_0( A_\theta \rtimes \Bbb Z_{4,6;2}) = \Bbb Z^{14}, \qquad
K_1( A_\theta \rtimes \Bbb Z_{4,6;2}) = \Bbb Z,
\]
\[
K_0( A_\theta \rtimes \Bbb Z_{6,6;2}) = \Bbb Z^{16}, \qquad
K_1( A_\theta \rtimes \Bbb Z_{6,6;2}) = \Bbb Z^{2} 
\]
\[
K_0( A_\theta \rtimes \Bbb Z_{6,6;3}) = \Bbb Z^{14}, \qquad 
K_1( A_\theta \rtimes \Bbb Z_{6,6;3}) = \Bbb Z^{2}.
\]
\end{thm}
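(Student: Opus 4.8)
The plan is to apply Natsume's Mayer--Vietoris exact sequence \ccite{TN} to the action of $\Gamma := \Bbb Z_{m,n;d} = \amlg mnd$ on $A_\theta$ and then reduce the resulting diagram to a single integral homomorphism. Write $\iota_{d,m}\colon A_\theta\rtimes\Bbb Z_d\hookrightarrow A_\theta\rtimes\Bbb Z_m$ and $\iota_{d,n}\colon A_\theta\rtimes\Bbb Z_d\hookrightarrow A_\theta\rtimes\Bbb Z_n$ for the crossed-product inclusions (they exist because $d\mid m$ and $d\mid n$), and $j_m,j_n$ for the inclusions into $A_\theta\rtimes\Gamma$. Natsume's sequence is the cyclic six-term sequence whose degree-$i$ ``horizontal'' part reads
\[
K_i(A_\theta\rtimes\Bbb Z_d)\xrightarrow{\ \psi_i\ }K_i(A_\theta\rtimes\Bbb Z_m)\oplus K_i(A_\theta\rtimes\Bbb Z_n)\xrightarrow{\ \phi_i\ }K_i(A_\theta\rtimes\Gamma),
\]
with $\psi_i=(\iota_{d,m*},-\iota_{d,n*})$, $\phi_i=j_{m*}+j_{n*}$, and connecting maps $K_i(A_\theta\rtimes\Gamma)\to K_{1-i}(A_\theta\rtimes\Bbb Z_d)$. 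By Kumjian \ccite{AK} and \ccite{ELPW} one has $K_1(A_\theta\rtimes\Bbb Z_j)=0$ for each $j\in\{2,3,4,6\}$, so the degree-$1$ horizontal part is identically zero; substituting this into the six-term sequence collapses it to the short exact sequence
\[
0\longrightarrow K_1(A_\theta\rtimes\Gamma)\longrightarrow K_0(A_\theta\rtimes\Bbb Z_d)\xrightarrow{\ \psi_0\ }K_0(A_\theta\rtimes\Bbb Z_m)\oplus K_0(A_\theta\rtimes\Bbb Z_n)\longrightarrow K_0(A_\theta\rtimes\Gamma)\longrightarrow 0 .
\]
Hence $K_1(A_\theta\rtimes\Gamma)\cong\ker\psi_0$ and $K_0(A_\theta\rtimes\Gamma)\cong\operatorname{coker}\psi_0$, and the theorem reduces to computing $\psi_0$, i.e. the two inclusion-induced maps $\iota_{d,m*},\iota_{d,n*}$ on $K_0$.

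To compute these I would work with the explicit free $\Bbb Z$-bases of $K_0(A_\theta\rtimes\Bbb Z_j)=\Bbb Z^{r(j)}$ ($r(2)=6$, $r(3)=8$, $r(4)=9$, $r(6)=10$) supplied by \ccite{ELPW} and the author's earlier work, organized by the fixed-point/orbifold data of the rotation action: two ``bulk'' generators coming from $K_0(A_\theta)=\Bbb Z^2$, and for each cone point of the quotient orbifold $T^2/\Bbb Z_j$ a block of generators indexed by the nontrivial characters of its isotropy group. The maps $\iota_{d,m*},\iota_{d,n*}$ are then pinned down by pairing the pushed-forward generators against the complete family of (possibly unbounded) tracial and cyclic functionals that detect $K_0$; on the bulk part this is functorial and injective, while at a cone point $x$ one of two things happens: either $x$ stays fixed under the larger group, in which case its block maps by the map determined by the inclusion $\Bbb Z_d\hookrightarrow\Bbb Z_{d'}$ of isotropy groups (an induction-type map on reduced representation rings), or $x$ is absorbed into a larger orbit with strictly smaller isotropy, in which case the blocks of all points of that orbit collapse onto a single block of the larger crossed product. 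It is the second phenomenon that makes $\psi_0$ non-injective: for $\Bbb Z_2\hookrightarrow\Bbb Z_4$ the flip-fixed points $(\tfrac12,0),(0,\tfrac12)$ fuse into one $\Bbb Z_4$-orbit, producing a rank-$1$ kernel of $\iota_{2,4*}$; for $\Bbb Z_2\hookrightarrow\Bbb Z_6$ the three non-origin flip-fixed points fuse into one $\Bbb Z_6$-orbit, producing a rank-$2$ kernel of $\iota_{2,6*}$; and for $\Bbb Z_3\hookrightarrow\Bbb Z_6$ the two non-origin cubic-fixed points fuse, producing a rank-$2$ kernel of $\iota_{3,6*}$. (This bookkeeping of the inclusion maps is the content of Theorem \ref{Kexact}.)

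Granting the matrices of $\iota_{d,m*}$ and $\iota_{d,n*}$, the four cases are finished by linear algebra over $\Bbb Z$. For $\amlg442$, $\amlg662$, $\amlg663$ one has $m=n$ and $\psi_0(x)=(\iota(x),-\iota(x))$, so the change of basis $(a,b)\mapsto(a+b,b)$ gives $\ker\psi_0\cong\ker\iota$ and $\operatorname{coker}\psi_0\cong\Bbb Z^{r(m)}\oplus\operatorname{coker}\iota$; since $\operatorname{im}\iota_{2,4*}$, $\operatorname{im}\iota_{2,6*}$, $\operatorname{im}\iota_{3,6*}$ have ranks $5,4,6$ respectively, this yields $K_0=\Bbb Z^{13},\Bbb Z^{16},\Bbb Z^{14}$ and $K_1=\Bbb Z,\Bbb Z^2,\Bbb Z^2$. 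For $\amlg462$ one has $\ker\psi_0=\ker\iota_{2,4*}\cap\ker\iota_{2,6*}$, which is the rank-$1$ group generated by the class of the flip-fixed pair $(\tfrac12,0)$ versus $(0,\tfrac12)$, so $\operatorname{rk}\operatorname{im}\psi_0=5$ and $K_0=\Bbb Z^{14}$, $K_1=\Bbb Z$. The one step that is not just a rank count --- and where care is needed --- is verifying that $\operatorname{coker}\psi_0$ is torsion free (the theorem asserts the $K_0$-groups are free abelian); I would obtain this by showing $\operatorname{im}\psi_0$ is a saturated sublattice, e.g. that all elementary divisors of the matrix of $\psi_0$ (equivalently of $\iota$ in the cases $m=n$) equal $1$.

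The main obstacle is thus the explicit determination of the inclusion-induced maps $\iota_{d,m*}$ on $K_0$. Unlike $A_\theta\hookrightarrow A_\theta\rtimes\Bbb Z_n$, these are not injective, so a rank count alone is insufficient: one must locate the kernel exactly and confirm saturation of the image, which requires tracking how the generating projections --- Powers--Rieffel type projections and projections localized at the isolated fixed points --- transform under the inclusions, and how the cone-point data of the quotient orbifolds $T^2/\Bbb Z_d\to T^2/\Bbb Z_m$ combine. Everything else --- Natsume's sequence, the vanishing of $K_1(A_\theta\rtimes\Bbb Z_j)$, and the elementary-divisor computation --- is routine once this input is in hand.
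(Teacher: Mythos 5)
Your reduction is exactly the paper's: Natsume's six-term sequence, the vanishing of $K_1(A_\theta\rtimes\mathbb Z_j)$ for $j=2,3,4,6$, the identification $K_1(A_\theta\rtimes\mathbb Z_{m,n;d})\cong\ker\psi_0$ and $K_0(A_\theta\rtimes\mathbb Z_{m,n;d})\cong\operatorname{coker}\psi_0$, and the concluding change-of-basis and rank bookkeeping; your predicted kernel and image ranks ($1,2,2$ and $5,4,6$) agree with Theorem \ref{Kexact}. The gap is that the actual content of the paper's proof --- the explicit determination of $\iota_*$, $\iota_*'$, $\kappa_*$ --- is deferred (``granting the matrices''). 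The orbifold fixed-point-fusion heuristic is not a proof: it suggests which generators become identified, but it neither shows that the kernel is exactly the span of those differences nor, crucially, that the images are saturated sublattices, which is what makes the $K_0$-groups torsion free. The paper establishes both by pushing the explicit bases $\xi_j$, $\lambda_j$ through the unbounded-trace (Chern--Connes) characters $\T_4$, $\T_6$, $\T_3$, expressing the image vectors integrally in the $\eta$-, $\mu$-, $\lambda$-tables, and exhibiting bases of the target groups containing them; this occupies Sections 3--6 and cannot be replaced by a rank count.

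Even granting Theorem \ref{Kexact} as input, your treatment of the mixed case $\amlg462$ is incomplete: knowing that each of $\iota_*$ and $\iota_*'$ separately has direct-summand image does not by itself make the image of $\psi_0=(\iota_*,-\iota_*')$ a direct summand of $\mathbb Z^{9}\oplus\mathbb Z^{10}$. The paper supplies this via Lemma \ref{summandlemma}, whose hypothesis $\ker\iota_*\subseteq\ker\iota_*'$ must be (and is) verified: $\ker\iota_*=\mathbb Z(\xi_3-\xi_4)\subset\mathbb Z(\xi_3-\xi_4)+\mathbb Z(\xi_4-\xi_5)=\ker\iota_*'$; the same lemma is also what the paper invokes in the $m=n$ cases. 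Your proposed alternative --- checking that all elementary divisors of the matrix of $\psi_0$ equal $1$ --- would work in principle, but it again requires the explicit matrices you have not produced. So the architecture of your plan matches the paper's, and the final linear algebra is right, but the decisive computations (and the summand argument for the combined map) remain to be carried out.
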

\medskip

In particular, it is interesting that unlike the free products case, the amalgamated products actions involve nonzero $K_1$'s. 

The above theorems require the following result regarding the three canonical inclusions 
\begin{align*}
\iota: A_\theta \rtimes \Bbb Z_2 &\to A_\theta \rtimes \Bbb Z_4 \\
\iota': A_\theta \rtimes \Bbb Z_2 &\to A_\theta \rtimes \Bbb Z_6 \\
\kappa: A_\theta \rtimes \Bbb Z_3 &\to A_\theta \rtimes \Bbb Z_6 
\end{align*}
and their induced $K_0$-maps, all of which are shown to be noninjective in a precise manner (in contrast with Theorem \ref{KfreeExact} below).
\begin{thm} \label{Kexact} We have the exact sequences
\[
\CD
0 @>  >> \mathbb Z @> {} >> K_0(A_\theta \rtimes \Bbb Z_2) = \mathbb Z^6 @>{\iota_*}>> 
K_0(A_\theta \rtimes \Bbb Z_4) = \mathbb Z^9 @>  >>  \mathbb Z^4 @>  >> 0
\endCD
\]
\vskip-18pt
\[
\CD
0 @>  >> \mathbb Z^2 @> {} >> K_0(A_\theta \rtimes \Bbb Z_2) = \mathbb Z^6 @>{\iota_*'}>> K_0(A_\theta \rtimes \Bbb Z_6) = \mathbb Z^{10} @>  >>  \mathbb Z^6 @>  >> 0
\endCD
\]
\[
\CD
0 @>  >> \mathbb Z^2 @> {} >> K_0(A_\theta \rtimes \Bbb Z_3) = \mathbb Z^8 @>{\kappa_*}>> K_0(A_\theta \rtimes \Bbb Z_6) = \mathbb Z^{10} @>  >>  \mathbb Z^4 @>  >> 0
\endCD
\]
where the images of $\iota_*, \iota_*', \kappa_*$ are direct summand subgroups of rank $5, 4,$ and $6$, respectively, and the free group injections into $K_0$ (in the left sides) are onto direct summands as well.
\end{thm}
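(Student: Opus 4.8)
The plan is to make the three inclusion maps completely explicit on $K_0$ and then read off the exact sequences and the direct‑summand statements by linear algebra over $\Bbb Z$. I would use throughout the description of $K_0(A_\theta\rtimes\Bbb Z_n)$ from \ccite{AK} and \ccite{ELPW}: a free $\Bbb Z$‑basis is exhibited by explicit projection classes, and the computation identifies $K_0(A_\theta\rtimes\Bbb Z_n)$ with the topological group $K^0_{\Bbb Z_n}(\Bbb T^2)$ (via homotopy invariance of the cocycle $\omega_\theta$ on $\Bbb Z^2\rtimes\Bbb Z_n$ together with Baum--Connes for that group, so that one may take $\theta=0$). Crucially this identification is natural with respect to subgroup inclusions, so under it $\iota_*$, $\iota'_*$, $\kappa_*$ become the induction maps $\mathrm{Ind}_{\Bbb Z_2}^{\Bbb Z_4}$, $\mathrm{Ind}_{\Bbb Z_2}^{\Bbb Z_6}$, $\mathrm{Ind}_{\Bbb Z_3}^{\Bbb Z_6}$ on equivariant $K$‑theory (equivalently, the maps on $K$‑theory induced by $C^*(\Bbb Z^2\rtimes\Bbb Z_d,\omega_\theta)\hookrightarrow C^*(\Bbb Z^2\rtimes\Bbb Z_n,\omega_\theta)$, which come from including $\Bbb C[\Bbb Z_d]$ in $\Bbb C[\Bbb Z_n]$).

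Second, I would compute these induction maps from the orbit data of the canonical actions on $\Bbb T^2=\widehat{\Bbb Z^2}$: the flip $\Bbb Z_2$ fixes the four points $(0,0),(\tfrac12,0),(0,\tfrac12),(\tfrac12,\tfrac12)$; the cubic $\Bbb Z_3$ fixes $(0,0),(\tfrac13,\tfrac13),(\tfrac23,\tfrac23)$; the Fourier $\Bbb Z_4$ fixes $(0,0),(\tfrac12,\tfrac12)$ and has one orbit $\{(\tfrac12,0),(0,\tfrac12)\}$ with isotropy $\Bbb Z_2$; the hexic $\Bbb Z_6$ fixes only $(0,0)$, has one orbit $\{(\tfrac13,\tfrac13),(\tfrac23,\tfrac23)\}$ with isotropy $\Bbb Z_3$ and one orbit $\{(\tfrac12,0),(0,\tfrac12),(\tfrac12,\tfrac12)\}$ with isotropy $\Bbb Z_2$. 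A Mayer--Vietoris splitting of $\Bbb T^2$ into invariant neighbourhoods of the singular orbits and the free part (which, since $K^1$ vanishes in all cases, is a split extension) gives $K^0_{\Bbb Z_n}(\Bbb T^2)\cong\Bbb Z^2\oplus\bigoplus_{\mathcal O}\widetilde R(\Gamma_{\mathcal O})$, the $\Bbb Z^2$ being $K^0$ of the quotient sphere $\Bbb T^2/\Bbb Z_n\cong S^2$ and $\widetilde R(\Gamma_{\mathcal O})$ the augmentation ideal of the isotropy group of each singular orbit; this reproduces the ranks $6,8,9,10$. The induction map then decomposes piece by piece: at an isotropy point that survives to the larger group (namely $(0,0)$ in all three cases, and also $(\tfrac12,\tfrac12)$ for $\Bbb Z_2\subset\Bbb Z_4$) it is the representation‑ring induction $\widetilde R(\Bbb Z_d)\hookrightarrow\widetilde R(\Bbb Z_n)$; on a family of $\Bbb Z_d$‑orbits that fuse into a single $\Bbb Z_n$‑orbit it is a surjection of $R$‑modules with free kernel — the only source of noninjectivity; and on the base it is the transfer for the branched covering $\Bbb T^2/\Bbb Z_d\to\Bbb T^2/\Bbb Z_n$ of $2$‑spheres.

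Third, assembling these data yields the matrices of $\iota_*,\iota'_*,\kappa_*$ and hence the sequences. A rank count — using that $\widetilde R(\Bbb Z_d)\hookrightarrow\widetilde R(\Bbb Z_n)$ is a rank‑preserving injection, that a fusion of $t$ orbits with isotropy $\Bbb Z_d$ contributes kernel of rank $(t-1)\,\mathrm{rk}\,\widetilde R(\Bbb Z_d)$ (here $t=2$ with $d=2,3$ and $t=3$ with $d=2$, giving kernel ranks $1,2,2$), and that the base transfer is injective — gives kernels $\Bbb Z,\Bbb Z^2,\Bbb Z^2$, images of ranks $5,4,6$, and cokernels of ranks $4,6,4$. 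The left‑hand injections landing on direct summands is automatic: the source $K_0$ is free and its quotient by the kernel is isomorphic to the image, a subgroup of the free group $K_0$ of the larger algebra, hence free, so the kernel splits off. To see the images themselves are direct summands one must rule out torsion in the cokernels, i.e.\ that each Smith invariant equals $1$; this I would get by checking each block above is split onto a direct summand of the target block — the representation‑ring inductions and their restrictions to the augmentation ideals are split injections (a short snake‑lemma argument comparing $R$ and $\widetilde R$), the fusion maps are split surjections with free kernel, and the base transfer on $K^0(S^2)=\Bbb Z^2$ has free cokernel — so the total image is a direct summand.

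The main obstacle is controlling steps two and three at the level of the integral lattice rather than the rational vector space: the orbit decomposition of $K^0_{\Bbb Z_n}(\Bbb T^2)$ is a Mayer--Vietoris splitting whose connecting maps must actually be computed, the image of $K_0(A_\theta)$ is distributed across both the base and the singular summands (so the pieces are not canonically separated and a naive block computation can miss the precise lattice), and the transfer on $K^0(S^2)$ for the branched covers must be pinned down exactly. The most reliable route, and the one in keeping with the rest of the paper, is to replace this topological bookkeeping by evaluation of a separating family of invariants on the explicit projection generators and their images under the inclusions: the canonical bounded trace $\tau$, the unbounded/twisted traces constructed in \ccite{SWa}, \ccite{SWb}, \ccite{SWd}, \ccite{SWe}, and the finite‑dimensional representations obtained by evaluating at the singular orbits (which carry the $R(\Gamma_{\mathcal O})$‑data). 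Each such invariant is compatible with $A_\theta\rtimes\Bbb Z_d\hookrightarrow A_\theta\rtimes\Bbb Z_n$, so the integer matrices of $\iota_*,\iota'_*,\kappa_*$ are read off and their Smith normal forms computed, giving the stated exact sequences together with the direct‑summand assertions.
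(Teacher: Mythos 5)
Your proposal is correct in substance, and its final recommendation is in fact the paper's own method: the paper proves all three sequences by evaluating the Chern--Connes character maps $\T_2,\T_3,\T_4,\T_6$ (the canonical trace together with the unbounded twisted traces of \ccite{SWa}, \ccite{SWb}, \ccite{SWd}, \ccite{BW}) on the explicit generators, writing the images $\xi_j'$, $\xi_j''$, $\lambda_j'$ as integral combinations of the basis vectors $\eta_k$, $\mu_k$, $\lambda_k$ from the character tables of \ccite{SWb} and \ccite{BW}, reading off the kernels ($\xi_3'=\xi_4'$; $\xi_3''=\xi_4''=\xi_5''$; $\lambda_4'=\lambda_6'$, $\lambda_5'=\lambda_7'$), and exhibiting by explicit basis changes that the images extend to bases of the targets (rather than invoking Smith normal form, which amounts to the same thing); no additional finite-dimensional evaluations are needed because the character maps are already injective on $K_0$. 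The equivariant/topological route occupying most of your write-up is a genuinely different argument and does correctly predict every rank, kernel and cokernel from the orbit data (fixed points fusing into $\Bbb Z_4$- and $\Bbb Z_6$-orbits), which is a nice structural explanation the paper does not give; but as you yourself note, its load-bearing integral claims --- naturality of the $\theta$-deformation identification $K_0(A_\theta\rtimes\Bbb Z_n)\cong K^0_{\Bbb Z_n}(\Bbb T^2)$ with respect to the inclusions, the splitness of the orbit decomposition compatibly with the induction maps, and the mixing of base and singular summands --- are exactly what is left unverified, so on its own it establishes the ranks but not the direct-summand statements. Since you resolve this by falling back on the trace/unbounded-trace computation on explicit generators, your proof as actually proposed coincides with the paper's, with the topological picture serving as a useful consistency check.
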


\medskip

With $i: A_\theta \to A_\theta \rtimes \Bbb Z_n$ denoting the canonical inclusion, we have the following.

\begin{thm}\label{KfreeExact}
For $n=2,3,4,6$, the canonical morphism $i_*: K_0(A_\theta) \to K_0(A_\theta \rtimes \Bbb Z_n)$ maps injectively onto a direct summand. 
\end{thm}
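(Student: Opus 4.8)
The plan is to prove injectivity once and for all by a duality argument that does not depend on $\theta$, and then to deduce the direct-summand property by first reducing to the cases $n=4,6$ and then quoting the explicit $K_0$-computations already available.

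For injectivity, note that the canonical inclusions compose, via Takai duality, to
\[
A_\theta \xrightarrow{\ i\ } A_\theta\rtimes\Bbb Z_n \xrightarrow{\ j\ } (A_\theta\rtimes\Bbb Z_n)\rtimes\widehat{\Bbb Z_n} \cong A_\theta\otimes\mathcal K(\ell^2\Bbb Z_n) = A_\theta\otimes M_n ,
\]
and that under this identification $j\circ i$ becomes the $*$-homomorphism $a\mapsto\mathrm{diag}\bigl(a,\gamma(a),\dots,\gamma^{\,n-1}(a)\bigr)$, where $\gamma$ is the pertinent order-$n$ automorphism ($\phi,\alpha,\sigma$, or $\rho$); hence $j_*\circ i_* = \sum_{k=0}^{n-1}(\gamma^k)_*$ on $K_0$. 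Each of $\phi,\alpha,\sigma,\rho$ induces the identity on $K_0(A_\theta)$ --- for irrational $\theta$ because the unique trace $\tau$ is preserved and $\tau_*$ separates the two classes of $K_0(A_\theta)\cong\Bbb Z^2$, and for all $\theta$ by the classical computation of these induced maps (cf. \ccite{ELPW} and the references there). Hence $j_*\circ i_* = n\cdot\mathrm{id}_{K_0(A_\theta)}$, which is injective since $K_0(A_\theta)\cong\Bbb Z^2$ is torsion-free; therefore $i_*$ is injective. (For irrational $\theta$ one may instead argue directly with the canonical trace $\hat\tau = \tau\circ E$ on the crossed product, where $E\colon A_\theta\rtimes\Bbb Z_n\to A_\theta$ is the canonical conditional expectation, for which $\hat\tau_*\circ i_* = \tau_*$ is injective.)

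For the direct-summand claim it suffices to show that $\mathrm{coker}\bigl(i_*\colon K_0(A_\theta)\to K_0(A_\theta\rtimes\Bbb Z_n)\bigr)$ is torsion-free: this, together with injectivity and the known ranks, identifies the cokernel with $\Bbb Z^{\,r(n)-2}$ and makes $i_*K_0(A_\theta)$ a direct summand. Observe first that $i$ factors as $A_\theta\to A_\theta\rtimes\Bbb Z_2\xrightarrow{\iota}A_\theta\rtimes\Bbb Z_4$, as $A_\theta\to A_\theta\rtimes\Bbb Z_2\xrightarrow{\iota'}A_\theta\rtimes\Bbb Z_6$, and as $A_\theta\to A_\theta\rtimes\Bbb Z_3\xrightarrow{\kappa}A_\theta\rtimes\Bbb Z_6$; a diagram chase using the exact sequences of Theorem \ref{Kexact} --- in particular the fact that $\ker\iota_*,\ker\iota_*',\ker\kappa_*$ and $\mathrm{im}\,\iota_*,\mathrm{im}\,\iota_*',\mathrm{im}\,\kappa_*$ are all direct summands --- shows that once the theorem holds for $A_\theta\rtimes\Bbb Z_4$ and $A_\theta\rtimes\Bbb Z_6$ it holds for $A_\theta\rtimes\Bbb Z_2$ and $A_\theta\rtimes\Bbb Z_3$. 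For $n=4$ and $n=6$ I would appeal to the computations of $K_0(A_\theta\rtimes\Bbb Z_n)$ in \ccite{ELPW} (and \ccite{SWe} for the Fourier case), which proceed by a $\theta$-independent Mayer--Vietoris / equivariant-cell argument and in which $i_*$ is visibly a split injection: there one produces an explicit $\Bbb Z$-basis of $K_0(A_\theta\rtimes\Bbb Z_n)$ consisting of $[1]$, the image $i_*[e_\theta]$ of a Powers--Rieffel projection $e_\theta\in A_\theta$, and classes of minimal projections in the finite-dimensional corners at the $\Bbb Z_n$-fixed points. For $n=2$, Kumjian's basis \ccite{AK} gives the same conclusion directly.

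I expect the direct-summand statement to be the crux: it does not follow formally from injectivity and the ranks, since one needs the cokernel to be \emph{free}, and this is precisely where the behavior here differs from that of the inclusions $A_\theta\rtimes\Bbb Z_d\to A_\theta\rtimes\Bbb Z_n$ of Theorem \ref{Kexact}. The genuinely delicate aspect is uniformity in $\theta$: the trace / real-rank-zero picture that makes the statement ``$\{[1],i_*[e_\theta]\}$ forms a primitive pair'' transparent is cleanest when $\theta$ is irrational (where $A_\theta\rtimes\Bbb Z_n$ is AF), so one either carries out the Mayer--Vietoris computation of \ccite{ELPW} $\theta$-independently from the outset, or else notes that the relevant integrality condition --- a suitable $2\times 2$ minor being $\pm1$ --- is locally constant in $\theta$ and hence propagates from the dense irrational set to every parameter value.
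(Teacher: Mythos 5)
Your injectivity argument is fine, and it is genuinely different from the paper's: the paper never argues injectivity abstractly, but reads it (together with the summand property) off an explicit computation, namely expressing $\T_n i_*[1]$ and $\T_n i_*[e_\theta]$ in the known $\mathbb Z$-bases of the character ranges from \cite{SWa}, \cite{SWb}, \cite{BW} (the $\xi$'s of \eqref{basisT2}, the $\eta$'s of \eqref{etas}, the $\mu$'s of \eqref{mus}, the $\lambda$'s of \eqref{lambdas}) and performing unimodular eliminations. Your Takai-duality observation that $j_*\circ i_*=\sum_k(\gamma^k)_*=n\cdot\mathrm{id}$ is a clean, $\theta$-uniform way to get injectivity, and your reduction of the summand property for $n=2,3$ to $n=4,6$ via purity of the image of the composite is sound in principle (purity of $\mathrm{im}(\iota_*i_*)$ plus injectivity and torsion-freeness does force purity of $\mathrm{im}(i_*)$), though it inverts the paper's order and still leaves the hard cases open.

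The gap is exactly at the point you yourself identify as the crux: for $n=4,6$ you give no argument, only the assertion that in \cite{ELPW} (and \cite{SWe}) the map $i_*$ is ``visibly a split injection'' because those papers exhibit a basis containing $[1]$ and $i_*[e_\theta]$. That is not what those computations provide. The available bases consist of classes of explicit modules --- the $\eta_1,\dots,\eta_9$ and $\mu_1,\dots,\mu_{10}$ above --- and $i_*[e_\theta]$ is \emph{not} among them: in the present paper one finds $\T_4 i_*[e_\theta]=-\eta_1+2\eta_3-\eta_4+2\eta_6+2\eta_8+2\eta_9$ and $\T_6 i_*[e_\theta]=-\mu_2+4\mu_3+3\mu_4+2\mu_5+\mu_6-2\mu_7+2\mu_8-3\mu_9+6\mu_{10}$, with mostly even coefficients, so that splitness is not automatic and rests on the presence of a $\pm1$ coefficient permitting elimination --- precisely the integral linear algebra carried out at the ends of Sections 3, 4 and 6, and precisely the content of the theorem (equivalently, torsion-freeness of the cokernel, which you correctly note does not follow from injectivity and ranks). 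Deferring this to references where it is not stated leaves the theorem unproved; your heuristic about minimal projections in finite-dimensional corners pertains to the AF picture for irrational $\theta$ only, and the closing remark that a suitable minor is ``locally constant in $\theta$'' is not an argument --- the classes $[e_\theta]$ for different $\theta$ live in different groups, the published bases themselves change with the parameter window (note the constant $c$ in \eqref{basisT2}), and making such a continuity/continuous-field argument rigorous would require exactly the kind of input you are trying to avoid. So the proposal, as it stands, establishes injectivity but not the direct-summand statement for $n=4,6$, and hence (through your own reduction) not for $n=2,3$ either.
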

The proofs for Theorem \ref{KfreeExact} are given at the end of Section 3 for $n = 2, 4$ (evident also from \ccite{SWa}, \ccite{SWb}, \ccite{SWd}), at the end of Section 4 for $n=6$, and at the end of Section 6 for $n=3$. 

{\Large\section{The Natsume Exact Sequence}}

Recall from \ccite{TN} that the Natsume six-term exact sequence for crossed products by free and amalgamated actions is
\[
\CD
K_0(A \rtimes N)   
@> {i_{1*} - i_{2*}} >>  
K_0(A \rtimes G) \oplus K_0(A \rtimes H)   
@>{ j_{1*} + j_{2*} }>>  
K_0(A \rtimes (G \Asterisk_N H) )
\\
@A  AA          @.              @VV  V 
\\
K_1(A \rtimes (G \Asterisk_N H)) 
@< { j_{1*} + j_{2*} } <<  
K_1(A \rtimes G) \oplus K_1(A \rtimes H)
@< { i_{1*} - i_{2*} } <<  
K_1(A \rtimes N)
\endCD
\]
where $G,H,N$ are groups acting on the C*-algebra $A$, $N$ is a common subgroup of $G$ and $H$, and
\[
i_1: A \rtimes N \to A \rtimes G, \qquad i_2: A \rtimes N \to A \rtimes H,
\]
\[ 
j_1: A \rtimes G \to A \rtimes (G \Asterisk_N H),\qquad j_2: A \rtimes H \to A \rtimes (G \Asterisk_N H)
\]
are the canonical inclusions. (All crossed products here are reduced.)
\medskip

Applied to the cyclic groups $\Bbb Z_n, \Bbb Z_m$ acting on the rotation algebra $A_\theta$, one gets the six-term exact sequence (so $N=1$, the trivial group)
\[
\CD
\Bbb Z^2 \cong K_0(A_\theta)
@> {i_{1*} - i_{2*}} >>
K_0(A_\theta \rtimes \Bbb Z_m) \oplus K_0(A_\theta \rtimes \Bbb Z_n)
@>{ j_{1*} + j_{2*} }>>
K_0(A_\theta \rtimes \Bbb Z_{m,n} )
\\
@A  AA          @.              @VV  V
\\
K_1(A_\theta \rtimes \Bbb Z_{m,n} )
@< { j_{1*} + j_{2*} } <<
K_1(A_\theta \rtimes \Bbb Z_m) \oplus K_1(A_\theta \rtimes \Bbb Z_n)
@< { i_{1*} - i_{2*} } <<
K_1(A_\theta) \cong \Bbb Z^2
\endCD
\]
By \ccite{ELPW}, $K_1(A_\theta \rtimes \Bbb Z_m)=0$ and $K_0(A_\theta \rtimes \Bbb Z_n) \cong \Bbb Z^{r(n)}$ for all $\theta$, so the sequence becomes
\[
\CD
\Bbb Z^2 \cong K_0(A_\theta)
@> {i_{1*} - i_{2*}} >>
\Bbb Z^{r(m)} \oplus \Bbb Z^{r(n)}
@>{ j_{1*} + j_{2*} }>>
K_0(A_\theta \rtimes \Bbb Z_{m,n} )
\\
@A  AA          @.              @VV  V
\\
K_1(A_\theta \rtimes \Bbb Z_{m,n} )
@< { j_{1*} + j_{2*} } <<
\bold{0} \oplus \bold{0}
@< { i_{1*} - i_{2*} } <<
K_1(A_\theta) \cong \Bbb Z^2
\endCD
\]
Here, $i_{1*} - i_{2*}$ maps a basis of $K_0(A_\theta)$ injectively unto a direct summand of $K_0(A_\theta \rtimes \Bbb Z_m) \oplus K_0(A_\theta \rtimes \Bbb Z_n)$ in each case in view of Theorem \ref{KfreeExact} and Lemma \ref{summandlemma} below. This gives $K_1(A_\theta \rtimes \Bbb Z_{m,n})=0$. As the map $j_{1*} + j_{2*}$ has a 2-dimensional kernel, and the vertical map on the right is onto a free abelian group,  we obtain $K_0(A_\theta \rtimes \Bbb Z_{m,n}) = \Bbb Z^{r(m)+r(n)-2} \oplus \Bbb Z^2 = \Bbb Z^{r(m)+r(n)}$.  From $r(2)=6,\ r(3)=8,\ r(4)=9,\ r(6)=10$, one obtains all the $K$-groups for the free products listed in Theorem \ref{thmfree}.

\medskip

For the crossed product of $A_\theta$ by any of the amalgamated products 
$\Bbb Z_{m,n;d} = \Bbb Z_{4,4;2}$, $\Bbb Z_{4,6;2}$, $\Bbb Z_{6,6;2}$, $\Bbb Z_{6,6;3}$, where $d=2,3$, the Natsume  exact sequence is
$$
\CD
K_0(A_\theta \rtimes \Bbb Z_d)
@> {i_{1*} - i_{2*}} >>
K_0(A_\theta \rtimes \Bbb Z_m) \oplus K_0(A_\theta \rtimes \Bbb Z_n)
@>{ j_{1*} + j_{2*} }>>
K_0(A_\theta \rtimes \Bbb Z_{m,n;d} )
\\
@A  AA          @.              @VV  V
\\
K_1(A_\theta \rtimes \Bbb Z_{m,n;d})
@< { j_{1*} + j_{2*} } <<
K_1(A_\theta \rtimes \Bbb Z_m) \oplus K_1(A_\theta \rtimes \Bbb Z_n)
@< { i_{1*} - i_{2*} } <<
K_1(A_\theta \rtimes \Bbb Z_d)
\endCD
$$
\medskip

Since $K_1(A_\theta \rtimes_r \Bbb Z_n) = 0$ for $n=2,3,4,6$ (see \ccite{ELPW}), one gets
\[
\CD
\Bbb Z^{r(d)} = K_0(A_\theta \rtimes \Bbb Z_d)
@> {i_{1*} - i_{2*}} >>
\Bbb Z^{r(m)} \oplus \Bbb Z^{r(n)}
@>{ j_{1*} + j_{2*} }>>
K_0(A_\theta \rtimes \Bbb Z_{m,n;d})
\\
@A  AA          @.              @VV  V
\\
K_1(A_\theta \rtimes \Bbb Z_{m,n;d})
@< {  } <<
\bold0 \oplus \bold0
@< {  } <<
\bold0
\endCD
\]
from which one gets
\[
K_0(A_\theta \rtimes \Bbb Z_{m,n;d}) = \Bbb Z^{r(m)+r(n)-s}
\]
\[ 
K_1(A_\theta \rtimes \Bbb Z_{m,n;d}) = \ker(i_{1*} - i_{2*})
\]
where $s = r(d) - \rk(\ker(i_{1*} - i_{2*}))$ is the rank of the image of $i_{1*} - i_{2*}$, which will need to be calculated and shown to be a direct summand of $\Bbb Z^{r(m)} \oplus \Bbb Z^{r(n)}$ in all four amalgamated cases. This, together with the kernel of $i_{1*} - i_{2*}$, are calculated in this paper.

It may be worthwhile and interesting to compare our results with similar results for the group C*-algebra of $\Bbb Z_{m,n;d}$ obtained by Natsume, $K_0(C^*(\Bbb Z_{m,n;d})) = \Bbb Z^{m+n-d}$ and $K_1(C^*(\Bbb Z_{m,n;d})) = 0$ (see \ccite{TN}, Section 6). The contrasting features seem more apparent with the nonvanishing of the $K_1$-groups in the amalgamated case.
\medskip

We end this section with a lemma needed in our calculations.
\medskip

\begin{lem}\label{summandlemma}
Let $f:\mathbb Z^p \to \mathbb Z^m$ and $g:\mathbb Z^p \to \mathbb Z^n$ be group morphisms with direct summand images and such that $\ker(f) \subseteq \ker(g)$. Then the morphism 
\[
h:\mathbb Z^p \to \mathbb Z^m \oplus \mathbb Z^n, \qquad h(x) = (f(x),g(x))
\]
 has direct summand image.
\end{lem}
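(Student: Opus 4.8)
The plan is to reduce to a standard fact about finitely generated free abelian groups: a subgroup $K\subseteq\mathbb Z^N$ is a direct summand if and only if it is \emph{saturated} (pure), i.e. whenever $kv\in K$ for some $v\in\mathbb Z^N$ and some nonzero $k\in\mathbb Z$ one has $v\in K$; equivalently, $\mathbb Z^N/K$ is torsion-free (being finitely generated and torsion-free, it is then free, so the quotient map $\mathbb Z^N\to\mathbb Z^N/K$ splits and exhibits $K$ as a summand). Thus it suffices to prove that $h(\mathbb Z^p)$ is saturated in $\mathbb Z^m\oplus\mathbb Z^n$.

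It is convenient to first record that the hypothesis $\ker(f)\subseteq\ker(g)$ makes $g$ factor through $f$: there is a well-defined homomorphism $\bar g\colon f(\mathbb Z^p)\to\mathbb Z^n$ with $\bar g\circ f=g$ (well-definedness is precisely the kernel inclusion, since $f(x)=f(x')$ means $x-x'\in\ker(f)\subseteq\ker(g)$). Consequently $h(x)=\bigl(f(x),\bar g(f(x))\bigr)$, so $h(\mathbb Z^p)$ is the graph of $\bar g$ over the subgroup $f(\mathbb Z^p)$. This reframing is optional, but it makes the structure transparent and also suggests the alternative argument sketched at the end.

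The core step. Suppose $(a,b)\in\mathbb Z^m\oplus\mathbb Z^n$ and $k\in\mathbb Z\setminus\{0\}$ satisfy $k(a,b)\in h(\mathbb Z^p)$, say $f(x)=ka$ and $g(x)=kb$ for some $x\in\mathbb Z^p$. Since $f(\mathbb Z^p)$ is a direct summand of $\mathbb Z^m$, it is saturated, so $ka=f(x)\in f(\mathbb Z^p)$ forces $a\in f(\mathbb Z^p)$; choose $x_0$ with $f(x_0)=a$. Then $f(x-kx_0)=ka-ka=0$, so $x-kx_0\in\ker(f)\subseteq\ker(g)$, which gives $g(x)=kg(x_0)$. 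Comparing with $g(x)=kb$ and using that $\mathbb Z^n$ is torsion-free yields $g(x_0)=b$. Hence $h(x_0)=(f(x_0),g(x_0))=(a,b)$, so $(a,b)\in h(\mathbb Z^p)$. Therefore $h(\mathbb Z^p)$ is saturated, hence a direct summand, as claimed.

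There is no real obstacle here; the argument is elementary linear algebra over $\mathbb Z$, and the only points needing care are the ``summand $\Leftrightarrow$ saturated'' equivalence and the genuine asymmetry in the roles of $f$ and $g$ — in fact only the summand hypothesis on $f$ is used, while $\ker(f)\subseteq\ker(g)$ is essential. For the latter, note that $f,g\colon\mathbb Z^2\to\mathbb Z$ with $f(x,y)=x$ and $g(x,y)=x+2y$ have (all of $\mathbb Z$ as) direct-summand images, yet $h(\mathbb Z^2)=\{(a,b):a\equiv b\ (\mathrm{mod}\ 2)\}$ is not a summand of $\mathbb Z^2$. (Alternatively, one can argue without saturation: choose a complement $\mathbb Z^m=f(\mathbb Z^p)\oplus C$, extend $\bar g$ to $\tilde g\colon\mathbb Z^m\to\mathbb Z^n$ by $\tilde g|_C=0$, and apply the shear automorphism $(w,z)\mapsto(w,z+\tilde g(w))$ of $\mathbb Z^m\oplus\mathbb Z^n$ to the evident summand $f(\mathbb Z^p)\oplus 0$; its image is exactly $h(\mathbb Z^p)$.)
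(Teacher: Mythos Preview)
Your proof is correct and takes a genuinely different route from the paper's. The paper passes to the quotient $\mathbb Z^q=\mathbb Z^p/\ker f$, chooses a basis of $\mathbb Z^q$ adapted to $\ker G$, and then explicitly exhibits a basis of $\mathbb Z^m\oplus\mathbb Z^n$ containing the basis of $H(\mathbb Z^q)$; in particular it invokes the direct-summand hypothesis on \emph{both} $f$ and $g$ to build those compatible bases. You instead invoke the purity criterion (summand $\Leftrightarrow$ saturated $\Leftrightarrow$ torsion-free quotient) and verify saturation directly, which is shorter and, as you correctly observe, uses only the summand hypothesis on $f$ together with $\ker f\subseteq\ker g$; the hypothesis on $g$ is redundant. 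Your shear-automorphism alternative is also valid and makes the graph-of-$\bar g$ picture explicit. The paper's approach has the minor virtue of producing concrete bases, which matches the style of the later sections where explicit basis manipulations are carried out, but your argument proves a slightly stronger statement with less work.
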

\begin{proof}
Let $K = \ker(f) = \ker(h)$. As $K$ is a direct summand subgroup of $\mathbb Z^p$, the quotient group $\frac{\mathbb Z^p}{K} \cong \mathbb Z^q$ for some $q$. One then gets the induced morphisms
\[
F: \mathbb Z^q = \tfrac{\mathbb Z^p}{K} \to \mathbb Z^m, \qquad F [x] = f(x)
\] 
\[
G: \mathbb Z^q = \tfrac{\mathbb Z^p}{K} \to \mathbb Z^n, \qquad G [x] = g(x)
\]
\[
H: \mathbb Z^q = \tfrac{\mathbb Z^p}{K} \to \mathbb Z^m \oplus \mathbb Z^n, 
\qquad H [x] = (f(x),g(x))
\]
in which $F,G,H$ have the same images as $f,g,h$, respectively, and $H(z) = (F(z), G(z))$ for $z \in \mathbb Z^q$. Both $H$ and $F$ are injective. (Of course, $G, H$ are well-defined by the hypothesis $\ker(f) \subseteq \ker(g)$.)
As $\ker(G)$ is also a direct summand of $\mathbb Z^q$, one can pick a basis for $\mathbb Z^q$ of the form
\[
e_1, \dots, e_r, \ e_1', \dots, e_s'
\]
where $e_1, \dots, e_r$ is a basis for $\ker(G)$. The image of $H$ has as basis the vectors
\begin{equation}\label{Himagebasis}
H(e_i) = (F(e_i),0), \qquad H(e_j') = (F(e_j'),G(e_j'))
\end{equation}
where $i=1,\dots,r$ and $j=1,\dots, s$. Our task is to show that the vectors in \eqref{Himagebasis} are contained in some basis of $\mathbb Z^m \oplus \mathbb Z^n$. 

By the hypothesis on the images of $f$ and $g$ being direct summands (and which are, respectively, the same as the images of $F$ and $G$), there is a basis of $\mathbb Z^m$ of the form
\[
F(e_1), \dots, F(e_r),\ F(e_1'), \dots, F(e_s'), \ x_1, \dots, x_\ell, 
\]
and there is a basis for $\mathbb Z^n$ of the form
\[
G(e_1'), \dots, G(e_s'), \ y_1, \dots, y_{\ell'}.
\]
From these, we obtain the following basis for $\mathbb Z^m \oplus \mathbb Z^n$:
\begin{align*}
(F(e_1),0), \dots, (F(e_r),0), &\ (F(e_1'),0), \dots, (F(e_s'),0), \ (x_1,0), \dots, (x_\ell,0)
\\
& \ (0,G(e_1')), \dots, (0,G(e_s')), \ (0,y_1), \dots, (0,y_{\ell'}).
\end{align*}
It is now evident that by adding the vectors $(0,G(e_1')), \dots, (0,G(e_s'))$, respectively, to the vectors $(F(e_1'),0), \dots, (F(e_s'),0)$ one obtains a basis for $\mathbb Z^m \oplus \mathbb Z^n$ that contains exactly the vectors in \eqref{Himagebasis} (which are a basis for the image of $H$). It follows that the image of $h$ is a direct summand.
\end{proof}

\bigskip

{\Large\section{$K$-theory of $A_\theta \rtimes \Bbb Z_{4,4;2}$}}
In this section we will calculate the morphism 
\[
\iota_*: \mathbb Z^6 = K_0(A_\theta \rtimes \Bbb Z_2) \to K_0(A_\theta \rtimes \Bbb Z_4) = \mathbb Z^9
\]
and show that its kernel is $\mathbb Z$ and its image is a direct summand subgroup isomorphic to $\mathbb Z^5$.  The canonical inclusion $\iota: A_\theta \rtimes \Bbb Z_2 \to A_\theta \rtimes \Bbb Z_4$ of crossed products is given by
\[
\iota(x + yW) =  x + yZ^2
\]
($x,y\in A_\theta$) where $W$ and $Z$ are the canonical orders 2 and 4 unitaries of the crossed products $A_\theta \rtimes \Bbb Z_2,  A_\theta \rtimes \Bbb Z_4$, respectively.

It is known from Proposition 3.2 of \ccite{SWa} that $K_0(A_\theta \rtimes \Bbb Z_2) = \mathbb Z^6$ has six basis generators $[1]=\xi_1, \xi_2, \dots, \xi_6$ with Chern-Connes characters given by the vectors
\begin{align}\label{basisT2}
\T_2(\xi_1) &= (1; 0,0,0,0), \quad \T_2(\xi_2) = (\tfrac12; 2,0,0,0), \quad 
\T_2(\xi_3) = (\tfrac12; 0,2,0,0)	\notag
\\
\T_2(\xi_4) &= (\tfrac12; 0,0,2,0), \quad \T_2(\xi_5) = (\tfrac12; 0,0,0,2), \quad 
\T_2(\xi_6) = (\tfrac\theta2; 1,c,-c,-1)
\end{align}
where $c=-1$ if $0<\theta<\frac12$ and $c=1$ if $\frac12<\theta<1$.

Recall that the Chern-Connes character for the flip automorphism $\phi$ is defined by
\[
\T_2: K_0(A_\theta \rtimes \Bbb Z_2) \to \mathbb R^5, \qquad
\T_2 = (\tau; \tau_{00}, \tau_{01}, \tau_{10}, \tau_{11})
\]
where 
\begin{equation}\label{taus}
\tau_{jk}(x + yW) = 4\phi_{jk}(y) 
\end{equation}
for $x,y\in A_\theta$, and
\begin{equation}\label{phis}
\phi_{jk}(U^mV^n) = e(-\tfrac\theta2 mn) \delta_2^{m-j} \delta_2^{n-k}
\end{equation}
where $\delta_r^s = 1$ if $r|s$, and $0$ otherwise, is the divisor delta function. Further, the canonical trace $\tau$ on the crossed product is given by $\tau(x + yW) = \tau(x)$ where $\tau(x)$ is the canonical normalized trace on the rotation algebra $A_\theta$.

We also remind ourselves, from \ccite{SWb} (see also \ccite{SWd}), that the Chern-Connes map $\T_4$ associated to the Fourier transform is 
\[
\T_4: K_0(A_\theta \rtimes \Bbb Z_4) \to \mathbb C^6, \qquad
\T_4 = (\tau; T_{10}, T_{11}; T_{20}, T_{21} T_{20})
\]
and is injective, where, for general elements $x = x_0 + x_1Z + x_2Z^2 + x_3Z^3$ in $A_\theta \rtimes \Bbb Z_4$ (where $x_j\in A_\theta$), one has
\begin{align*}
T_{10}(x) &= \psi_{10}(x_3), \qquad T_{11}(x) = \psi_{11}(x_3),
\\
T_{20}(x) &= \psi_{20}(x_2), \qquad  T_{21}(x) = \psi_{21}(x_2), \qquad T_{22}(x) = \psi_{22}(x_2)
\end{align*}
and $\psi_{ij}$ are the unbounded linear functionals (see \ccite{SWb}, Proposition 2.2)
\begin{align*}
\psi_{10}(U^mV^n) &= e(\tfrac\theta4 (m-n)^2) \updelta_2^{m-n}, \\ 
\psi_{11}(U^mV^n) &= e(\tfrac\theta4 (m-n)^2) \updelta_2^{m-n-1}
\end{align*}
and 
\begin{align*}
\psi_{20}(U^mV^n) &= \phi_{00}(U^mV^n) = e(-\tfrac\theta2 mn)  \updelta_2^{m} \updelta_2^{n}
\\
\psi_{21}(U^mV^n) &= \phi_{11}(U^mV^n) = e(-\tfrac\theta2 mn)  \updelta_2^{m-1} \updelta_2^{n-1}
\\
\psi_{22}(U^mV^n) &= (\phi_{01} + \phi_{10})(U^mV^n) = e(-\tfrac\theta2 mn) \updelta_2^{m-n-1}.
\end{align*}
The $T_{1j}$ are concentrated on the $Z^3$-component of $x$, and $T_{2j}$ are concentrated on the $Z^2$ part, and that the $T_{jk}$ are $0$ elsewhere. The canonical trace $\tau$ on the Fourier crossed product is 
\[
\tau(x_0 + x_1Z + x_2Z^2 + x_3Z^3) = \tau(x_0)
\]
where $\tau(x_0)$ is the normalized canonical trace on the rotation algebra 
$A_\theta$.

From the character table on page 645 of \ccite{SWb}, the range of $\T_4$ has the $\mathbb Z$-basis vectors
\begin{align}\label{etas}
\eta_1 &= (\tfrac12; \ 0, 0; \ \tfrac12, 0 , 0)	 \notag \\
\eta_2 &= (\tfrac12; \ \tfrac{1-i}{4}, 0; \ 0, 0 , 0)	\notag \\
\eta_3 &= (\tfrac14; \ \tfrac14, 0; \ \tfrac14, 0 , 0)	\notag \\
\eta_4 &= (\tfrac12; \ 0, 0; \ 0, \tfrac12, 0)	\notag \\
\eta_5 &= (\tfrac12; \ 0, \tfrac{1-i}{4}; \ 0, 0, 0)  \\
\eta_6 &= (\tfrac14; \ 0, \tfrac14; \ 0, \tfrac14, 0)	 \notag\\
\eta_7 &= (\tfrac\theta4; \ \tfrac{1+i}{8}, \tfrac{1+i}{8}; \ \tfrac{1}{8}, \tfrac{1}{8}, \tfrac{1}{4})	\notag \\
\eta_8 &= (\tfrac\theta4; \ \tfrac{-1+i}{8}, \tfrac{-1+i}{8}; \ -\tfrac{1}{8}, - \tfrac{1}{8}, 
- \tfrac{1}{4}) \notag	\\
\eta_9 &= (\tfrac\theta4; \ \tfrac{-1-i}{8}, \tfrac{-1-i}{8}; \ \tfrac{1}{8}, \tfrac{1}{8}, \tfrac{1}{4}). \notag
\end{align}
Since the compositions of $T_{10}$ and $T_{11}$ with the inclusion $\iota: A_\theta \rtimes \Bbb Z_2 \to A_\theta \rtimes \Bbb Z_4$ are identically 0, it follows that the induced $K_0$ map $\iota_*$ on all the basis elements $\xi_1, \dots, \xi_6$ has 0's in the $T_{10}$ and $T_{11}$ components. Further, from 
\[
T_{20} \iota_*(\xi) = \tfrac14 \tau_{00}(\xi), \quad
T_{21} \iota_*(\xi) = \tfrac14 \tau_{11}(\xi), \quad 
T_{22} \iota_*(\xi) = \tfrac14 (\tau_{01} + \tau_{10})(\xi)
\]
for any $K_0$-class $\xi$ in $K_0(A_\theta \rtimes \Bbb Z_2)$, we obtain
\[
\T_4 \iota_*(\xi) = (\tau(\xi); 0, 0; T_{20}(\xi), T_{21}(\xi), T_{22}(\xi))
= (\tau(\xi_j); 0, 0; \tfrac14\tau_{00}(\xi), \tfrac14\tau_{11}(\xi), \tfrac14(\tau_{01}+\tau_{10})(\xi)).
\]
From this we obtain the image of the generators $\xi_j$ in the span of the group generated by the $\eta_k$'s, namely 
\begin{align*}
\xi_1' := \T_4 \iota_*(\xi_1) &= (1; 0, 0; 0, 0, 0) \\
\xi_2' := \T_4 \iota_*(\xi_2) &= (\tfrac12; 0, 0; \tfrac12, 0, 0) \ = \ \eta_1 \\
\xi_3' := \T_4 \iota_*(\xi_3) &= (\tfrac12; 0, 0; 0, 0, \tfrac12) \\
\xi_4' := \T_4 \iota_*(\xi_4) &= (\tfrac12; 0, 0; 0, 0, \tfrac12) \\
\xi_5' := \T_4 \iota_*(\xi_5) &= (\tfrac12; 0, 0; 0, \tfrac12, 0) \ = \ \eta_4 \\
\xi_6' := \T_4 \iota_*(\xi_6) &= (\tfrac\theta2; 0, 0; \tfrac14, -\tfrac14, 0).
\end{align*}
Two of the vectors here are the same, $\xi_3' = \xi_4'$, giving the kernel
\[
\ker(\iota_*) = \mathbb Z(\xi_3-\xi_4),
\]
and the image of $\iota_*$ is generated by the 5 independent vectors associated to $\xi_1', \xi_2', \xi_3', \xi_5', \xi_6'$ which we need to show are a subset of a basis for the integral span of the $\eta$ vectors in \eqref{etas}. Since $\xi_2' = \eta_1$ and $\xi_5' = \eta_4$, it remains for us to express the vectors $\xi_1', \xi_3', \xi_6'$ as linear combinations of the $\eta$'s. Indeed, one checks that we have the following integral combinations
\begin{align*}
\xi_1' &= \eta_1 + \eta_2 - 2\eta_3 + \eta_4 + \eta_5 - 2\eta_6 + \eta_7 - \eta_9
\\
\xi_2' &= \eta_1
\\
\xi_3' &= \eta_1 + \eta_2 - 3\eta_3 + \eta_4 + \eta_5 - 3\eta_6 + 2 \eta_7 - \eta_8 -\eta_9
\\
\xi_5' &= \eta_4
\\
\xi_6' &= \eta_3 - \eta_4 + \eta_6 + \eta_8 + \eta_9.
\end{align*}
From these, one obtains a basis for the range of $\T_4$ containing the vectors $\{\xi_1', \xi_2', \xi_3', \xi_5', \xi_6'\}$ as follows. Using the $\xi_6'$ equation, one solves to eliminate $\eta_9$: 
\[
\eta_9 = \xi_6' - \eta_3 + \eta_4 - \eta_6 - \eta_8.
\]
Inserting this into the $\xi_1'$ equation one gets
\[
\xi_1' = \eta_1 + \eta_2 - \eta_3 + \eta_5 - \eta_6 + \eta_7 - \xi_6'  + \eta_8
\]
and into the $\xi_3'$ equation to get
\begin{equation}\label{xi3prime}
\xi_3'  = \eta_1 + \eta_2 - 2\eta_3  + \eta_5 - 2\eta_6 + 2 \eta_7  - \xi_6'.
\end{equation}
Solving the first of these for $\eta_8$ gives
\[
\eta_8 = \xi_1' + \xi_6'  - \eta_1 - \eta_2 + \eta_3 - \eta_5 + \eta_6 - \eta_7 
\]
and from \eqref{xi3prime} one could eliminate $\eta_5$ to obtain a basis consisting of the following nine vectors
\[
\eta_1 = \xi_2', \quad \eta_2, \quad \eta_3, \quad \eta_4 = \xi_5',  \quad \eta_6, \quad \eta_7, \quad \xi_1', \quad \xi_3', \quad \xi_6' 
\]
for $K_0(A_\theta \rtimes \Bbb Z_4)$ containing $\{\xi_1', \xi_2', \xi_3', \xi_5', \xi_6'\}$. This shows that the image of the canonical map
\[
\CD
 \mathbb Z^6 = K_0(A_\theta \rtimes \Bbb Z_2) @>{\iota_*}>> 
K_0(A_\theta \rtimes \Bbb Z_4) = \mathbb Z^9 
\endCD
\]
is a direct summand of $K_0(A_\theta \rtimes \Bbb Z_4)$ isomorphic to $\mathbb Z^5$, and its kernel isomorphic to $\mathbb Z$, giving the first exact sequence in Theorem \ref{Kexact}.

Therefore, in the Natsume sequence
\[
\CD
\Bbb Z^{6} = K_0(A_\theta \rtimes \Bbb Z_2)
@> {\iota_{1*} - \iota_{2*}} >>
\Bbb Z^{9} \oplus \Bbb Z^{9}
@>{ j_{1*} + j_{2*} }>>
K_0(A_\theta \rtimes_r \Bbb Z_{4,4;2})
\\
@A  AA          @.              @VV  V
\\
K_1(A_\theta \rtimes \Bbb Z_{4,4;2})
@< {  } <<
\bold0 \oplus \bold0
@< { } <<
\bold0
\endCD
\]
the map 
\[
\iota_{1*} - \iota_{2*}: K_0(A_\theta \rtimes \Bbb Z_2) \to 
K_0(A_\theta \rtimes \Bbb Z_4) \oplus K_0(A_\theta \rtimes \Bbb Z_4) = \mathbb Z^{18}
\]
has direct summand image, by Lemma \ref{summandlemma} (as $\ker \iota_{1*} = \ker \iota_{2*}$). Since its rank is $6 - \rk(\ker \iota_*) = 5$ we obtain
\[
K_0(A_\theta \rtimes \Bbb Z_{4,4;2}) = \mathbb Z^{18-5} = \mathbb Z^{13} ,
\qquad
K_1(A_\theta \rtimes \Bbb Z_{4,4;2}) = \mathbb Z
\]
as in the statement of Theorem \ref{thmamalg}. The $K_1$-group here follows since the kernel of $\iota_{1*} - \iota_{2*}$ is the same as the kernel of $\iota_*$, which is $\mathbb Z(\xi_3-\xi_4) \cong \mathbb Z$. 

\bigskip

In the remainder of this section we prove the injectivity of the map $i_*: K_0(A_\theta) \to K_0(A_\theta \rtimes \Bbb Z_2)$ as stated in Theorem \ref{KfreeExact}. Recall, $K_0(A_\theta) = \mathbb Z^2$ is generated by the classes of the identity $1$ and a Rieffel projection $e_\theta$ of trace $\theta$. For the identity, we have $\T_2 i_*[1] = (1;0,0,0,0)$ and for the Rieffel projection we have, in terms of the basis listed in \eqref{basisT2},
\[
\T_2 i_*[e_\theta] = (\theta;0,0,0,0) = 2\T_2(\xi_6) - \T_2(\xi_2) + \T_2(\xi_3) 
- \T_2(\xi_4) + \T_2(\xi_5)   
\]
so that clearly $\iota_*$ maps $K_0(A_\theta)$ to a direct summand of $K_0(A_\theta \rtimes \Bbb Z_2)$ isomorphic to $\mathbb Z^2$.
\medskip

We now show that the canonical map $i_*: K_0(A_\theta) \to K_0(A_\theta \rtimes \mathbb Z_4)$ is injective and its image is a direct summand (also part of Theorem \ref{KfreeExact}).  For the identity, we have $\T_4 i_*[1] = (1;0,0;0,0,0)$, and for the Rieffel projection we have $\T_4 i_*[e_\theta] = (\theta;0,0;0,0,0)$ (since $e_\theta$ being in $A_\theta$ has zero $Z, Z^2, Z^3$ components). Therefore, in terms of the $\eta$-basis listed in \eqref{etas},
\[
\T_4 i_*[1] = \xi_1' = \eta_1 + \eta_2 - 2\eta_3 + \eta_4 + \eta_5 - 2\eta_6 + \eta_7 - \eta_9
\]
and
\begin{align*}
\T_4 i_*[e_\theta] &= (\theta;0;0,0;0,0) = 2\xi_6' - (\xi_2' - \xi_5') 
\\
&= - \eta_1 + 2\eta_3 - \eta_4 + 2\eta_6 + 2\eta_8 + 2\eta_9  
\end{align*}
using the above expressions for $\xi_2', \xi_5', \xi_6'$.  The latter can be used to eliminate $\eta_1$, and from the former one eliminates $\eta_2$ to obtain the basis
\[
\T_4 i_*[1], \quad \T_4 i_*[e_\theta], \quad  
\eta_3, \quad \eta_4, \quad \eta_5, \quad \eta_6, \quad \eta_7, \quad \eta_8, \quad \eta_9 
\]
so that $i_*$ maps $K_0(A_\theta)$ onto a direct summand of $K_0(A_\theta \rtimes \Bbb Z_4)$ isomorphic to $\mathbb Z^2$. 

\bigskip

{\Large\section{$K$-theory of $A_\theta \rtimes \Bbb Z_{4,6;2}$}}

We now summarize the analogous framework for the hexic transform $\rho$. The Chern-Connes character map $\T_6$ for the crossed product $A_\theta \rtimes \Bbb Z_6$ is 
\[
\T_6: K_0(A_\theta \rtimes \Bbb Z_6) \to \mathbb C^6, \qquad
\T_6 = (\tau; H_{10}; \ H_{20}, H_{21};\ H_{30}, H_{31})
\]
(see \ccite{BW}), where, for general elements
\[
x = x_0 + x_1X + x_2X^2 + x_3X^3 + x_4X^4 + x_5X^5
\]
in $A_\theta \rtimes \Bbb Z_6$, with $X$ denoting the canonical unitary of $A_\theta \rtimes \Bbb Z_6$ for $\rho$, $X^6=I$, and $x_j\in A_\theta$, one has
\[
H_{jk}(x) = \Psi_{jk}(x_{6-j})
\]
and, in view of Theorem 3.1 \ccite{BW}, the unbounded linear functionals $\Psi_{jk}$ on $A_\theta$ are defined by
\begin{align}
\Psi_{10}(U^mV^n) &= e(\tfrac\theta2 (m^2+n^2)) \\ 
\Psi_{20}(U^mV^n) &= e(\tfrac\theta6(m-n)^2)  \delta_3^{m-n}
\\
\Psi_{21}(U^mV^n) &= e(\tfrac\theta6(m-n)^2) 
\\
\Psi_{30}(U^mV^n) &= e(-\tfrac\theta2 mn) \delta_2^{m} \delta_2^{n} 
\\
\Psi_{31}(U^mV^n) &= e(-\tfrac\theta2 mn).
\end{align}
We note that $\Psi_{30} = \phi_{00}$ and $\Psi_{31} = \phi_{00} + \phi_{01} + \phi_{10} + \phi_{11}$ (with $\phi_{jk}$ given by \eqref{phis}). Also, the map $H_{10}$ is concentrated on the $X^5$ term of $x$, $H_{2k}$ is concentrated on the $X^4$ term, $H_{3k}$ concentrated on the $X^3$, and of course, $H_{jk}$ is $0$ elsewhere. The canonical trace $\tau$ is given by
\[
\tau(x_0 + x_1X + x_2X^2 + x_3X^3 + x_4X^4 + x_5X^5) = \tau(x_0).
\]

From the character table on page 37 of \ccite{BW}, the range of $\T_6$ has $\mathbb Z$-basis
\begin{align}\label{mus}
\mu_1 &= (1; \ 0;\ 0, 0;\ 0, 0)	 \notag \\
\mu_2 &= (\tfrac16; \ \tfrac16;\ \tfrac16, \tfrac16;\ \tfrac16, \tfrac16)	\notag \\
\mu_3 &= (\tfrac16; \ \tfrac{1-\omega}6;\ -\tfrac16\omega, -\tfrac16\omega;\ -\tfrac16, -\tfrac16)	\notag \\
\mu_4 &= (\tfrac16; \ -\tfrac16\omega;\ \tfrac{\omega-1}6, \tfrac{\omega-1}6;\ \tfrac16, \tfrac16) \\
\mu_5 &= (\tfrac16; \ -\tfrac16;\ \tfrac16, \tfrac16;\ -\tfrac16, -\tfrac16)	\notag \\
\mu_6 &= (\tfrac16; \ \tfrac{\omega-1}6;\ -\tfrac16\omega, -\tfrac16\omega;\ \tfrac16, \tfrac16)	\notag \\
\mu_7 &= (\tfrac13; \ 0;\ 0, \tfrac13;\ 0, 0)	\notag \\
\mu_8 &= (\tfrac13; \ 0;\ 0, -\tfrac13\omega;\ 0, 0) \notag	\\
\mu_9 &= (\tfrac12; \ 0;\ 0, 0;\ 0, \tfrac12) \notag  \\
\mu_{10} &= (\tfrac\theta6; \ \tfrac16\omega;\ \tfrac{1+\omega}{18}, \tfrac{1+ \omega}{6};\ \tfrac1{12}, \tfrac13) \notag
\end{align}
where $\omega := e(\tfrac16) = e^{\pi i/3} = \tfrac12(1+i\sqrt3)$. 

Since the inclusion $\iota': A_\theta \rtimes \Bbb Z_2 \to A_\theta \rtimes \Bbb Z_6$ is given by $\iota'(x+yW) = x+yX^3$, its composition with $H_{10}, H_{20}, H_{21}$ are $0$. It follows that the induced $K_0$ map $\iota_*'$ on all the basis elements $\xi_1, \dots, \xi_6$ of $K_0(A_\theta \rtimes \Bbb Z_2)$ has $0$'s in the $H_{10}, H_{20}, H_{21}$ components. Further, from 
\[
H_{30} \iota' = \tfrac14 \tau_{00}, \qquad
H_{31} \iota' = \tfrac14 (\tau_{00}+\tau_{11}+\tau_{01}+\tau_{10}), \quad 
\]
(where the $\tau_{jk}$ are given by \eqref{taus}), we obtain, for any $K$-class $\xi$ in $K_0(A_\theta \rtimes \Bbb Z_2)$,
\begin{align*}
\T_6 \iota_*'(\xi) &= (\tau(\xi); 0; 0, 0; H_{30}(\xi), H_{31}(\xi))	\\
&= (\tau(\xi); 0; 0, 0;\ \tfrac14\tau_{00}(\xi), \tfrac14(\tau_{00}+ \tau_{11} + \tau_{01}+\tau_{10})(\xi)).
\end{align*}
From this we get the corresponding images of the generators $\xi_j$ in the span of the group generated by the $\mu_k$'s as follows:
\begin{align}\label{xiprimeprime}
\xi_1'' := \T_6 \iota_*'(\xi_1) &= (1; 0; 0, 0; 0, 0) \ = \ \mu_1 \notag \\
\xi_2'':= \T_6 \iota_*'(\xi_2) &= (\tfrac12; 0; 0, 0; \tfrac12, \tfrac12) \notag\\
\xi_3'' := \T_6 \iota_*'(\xi_3) &= (\tfrac12; 0; 0, 0; 0, \tfrac12) \ = \ \mu_9 \\
\xi_4'' := \T_6 \iota_*'(\xi_4) &= (\tfrac12; 0; 0, 0; 0, \tfrac12) \ = \ \mu_9 \notag\\
\xi_5'' := \T_6 \iota_*'(\xi_5) &= (\tfrac12; 0; 0, 0; 0, \tfrac12) \ = \ \mu_9 \notag\\
\xi_6'' := \T_6 \iota_*'(\xi_6) &= (\tfrac\theta2; 0; 0,0;\ \tfrac14, 0). \notag
\end{align}
Note that now {\it three} of these vectors are the same as $\mu_9$, so that they give 4 independent vectors
\[
\xi_1'' = \mu_1, \qquad \xi_2'', \qquad \xi_3'' = \mu_9, \qquad  \xi_6''
\]
spanning the image of $\T_6\iota_*'$. We want to show that these 4 vectors are contained inside some basis for the integral span of the $\mu$ vectors in \eqref{mus}. It is easy to check that 
\begin{align}\label{xi26}
\xi_2'' &= \mu_2 + \mu_4 + \mu_6
\\
\xi_6'' &= 2\mu_3 + 2\mu_4 + \mu_5 + \mu_6 - \mu_7 + \mu_8 - 2\mu_9 + 3\mu_{10}
\end{align}
from which it is now clear that the induced map 
\[
\iota_*': K_0(A_\theta \rtimes \Bbb Z_2) \to K_0(A_\theta \rtimes \Bbb Z_6) = 
\mathbb Z^{10}
\]
has kernel
\[
\ker(\iota_*') = \mathbb Z(\xi_3 - \xi_4) + \mathbb Z(\xi_4 - \xi_5) \cong \mathbb Z^2.
\]
In addition, this shows that the image of $\iota_*'$ is a direct summand isomorphic to $\mathbb Z^4$. This establishes the exact sequence in Theorem \ref{Kexact} associated to $\iota_*'$. Therefore, in the Natsume sequence 
\[
\CD
\Bbb Z^{6} = K_0(A_\theta \rtimes \Bbb Z_2)
@> {\iota_{*} - \iota_*'} >>
\Bbb Z^{9} \oplus \Bbb Z^{10}
@>{ j_{1*} + j_{2*} }>>
K_0(A_\theta \rtimes_r \Bbb Z_{4,6;2})
\\
@A  AA          @.              @VV  V
\\
K_1(A_\theta \rtimes \Bbb Z_{4,6;2})
@< {  } <<
\bold0 \oplus \bold0
@< {  } <<
\bold0
\endCD
\]
the map $\iota_{*} - \iota_*'$ has, by Lemma \ref{summandlemma}, direct summand image (the condition $\ker \iota_* \subset \ker \iota_*'$ of the lemma being met). Its rank is $6 - \rk(\ker \iota_*) = 5$ so therefore obtain
\[
K_0(A_\theta \rtimes \Bbb Z_{4,6;2}) = \mathbb Z^{9+10-5} = \mathbb Z^{14} ,
\qquad
K_1(A_\theta \rtimes \Bbb Z_{4,6;2}) = \mathbb Z
\]
as in the statement of Theorem \ref{thmamalg}, as $\ker(\iota_* - \iota_*') \cong \mathbb Z$.

\medskip

We now check that the canonical map $i_*: K_0(A_\theta) \to K_0(A_\theta \rtimes \mathbb Z_6)$ is injective and its image is a direct summand. Recall, $K_0(A_\theta) = \mathbb Z^2$ is generated by the classes of the identity $1$ and a Rieffel projection $e_\theta$ of trace $\theta$. For the identity, we have $\T_6 i_*[1] = (1;0;0,0;0,0)$ and for the Rieffel projection we have $\T_6 i_*[e_\theta] = (\theta;0;0,0;0,0)$ (since $e_\theta$ being in $A_\theta$ has zero $X^j$-components). Therefore, in terms of the $\mu$-basis listed in \eqref{mus},
\[
\T_6 i_*[1] = \mu_1  
\]
and
\begin{align*}
\T_6 i_*[e_\theta] &= (\theta;0;0,0;0,0) = 2\xi_6'' - (\xi_2'' - \mu_9) 
\\
&= - \mu_2 + 4\mu_3 + 3\mu_4 + 2\mu_5 + \mu_6 - 2\mu_7 + 2\mu_8 - 3\mu_9 + 6\mu_{10}
\end{align*}
first by using \eqref{xiprimeprime} and then using \eqref{xi26}.  The latter can be used to eliminate $\mu_2$ to get the basis
\[
\mu_1=\T_6 i_*[1], \quad \T_6 i_*[e_\theta], \quad  
\mu_3, \quad \mu_4, \quad \mu_5, \quad \mu_6, \quad \mu_7, \quad \mu_8, \quad \mu_9, \quad \mu_{10}, 
\]
so that $i_*$ maps $K_0(A_\theta)$ onto a direct summand of $K_0(A_\theta \rtimes \Bbb Z_6)$ isomorphic to $\mathbb Z^2$.

\bigskip

{\Large\section{$K$-theory of $A_\theta \rtimes \Bbb Z_{6,6;2}$}}

In the current case, again using Lemma \ref{summandlemma}, the Natsume sequence is
\[
\CD
\Bbb Z^{6} = K_0(A_\theta \rtimes \Bbb Z_2)
@> {\iota_{1*}' - \iota_{2*}'} >>
\Bbb Z^{10} \oplus \Bbb Z^{10}
@>{ j_{1*} + j_{2*} }>>
K_0(A_\theta \rtimes_r \Bbb Z_{6,6;2})
\\
@A  AA          @.              @VV  V
\\
K_1(A_\theta \rtimes \Bbb Z_{6,6;2})
@< {  } <<
\bold0 \oplus \bold0
@< {  } <<
\bold0
\endCD
\]
where the inclusion maps $\iota_{1*}', \iota_{2*}'$ into each summand were calculated in the previous section. Since the kernel of $\iota_{1*}' - \iota_{2*}'$ is the same as the kernel of $\iota_*'$, which is isomorphic to $\mathbb Z^2$, the rank of $\iota_{1*}' - \iota_{2*}'$ is 4, whence
\[
K_0(A_\theta \rtimes \Bbb Z_{6,6;2}) = \mathbb Z^{10+10-4} = \mathbb Z^{16} , \qquad
K_1(A_\theta \rtimes \Bbb Z_{6,6;2}) = \mathbb Z^2
\]
as in Theorem \ref{thmamalg}.

\bigskip

{\Large\section{$K$-theory of $A_\theta \rtimes \Bbb Z_{6,6;3}$}}

Here we will need to calculate the $K_0$-map
\[
\kappa_*: \mathbb Z^8 = K_0(A_\theta \rtimes \Bbb Z_3) \to K_0(A_\theta \rtimes \Bbb Z_6)
=\mathbb Z^{10}
\]
induced by the canonical inclusion $\kappa: A_\theta \rtimes \Bbb Z_3 \to 
A_\theta \rtimes \Bbb Z_6$ given by
\[
\kappa(x_0+x_1Y+x_2Y^2) = x_0+x_1X^2+x_2X^4
\]
where $Y$ is the canonical unitary of the crossed product $A_\theta \rtimes \Bbb Z_3$ (with $Y^3=I$), and $x_j\in A_\theta$. (As above, $X^6=I$.)

From Section 4 of \ccite{BW}, the Chern-Connes character map $\T_3$ takes the form 
\[
\T_3: K_0(A_\theta \rtimes \Bbb Z_3) \to \mathbb C^4, \qquad
\T_3 = (\tau; \ S_{10}, S_{11}, S_{12})
\]
where
\[
S_{1k}(x_0 + x_1Y + x_2Y^2) = \Phi_{1k}(x_{2})
\]
($k=0,1,2$) and, in view of Theorem 3.3 \ccite{BW}, the unbounded functionals $\Phi_{1k}$ on $A_\theta$ are defined by\footnote{In comparing the functionals $\Phi_{1k}$ with those in \ccite{BW}, it's important to note that we multiplied the map $\varphi_{11}$ in Theorem 3.3 of \ccite{BW} by $e(\theta/6)$ to obtain $\Phi_{11}$, and multiplied $\varphi_{12}$ by $e(4\theta/6)$ in obtaining $\Phi_{12}$ to remove the extra constant factors in $\varphi_{11}, \varphi_{12}$.}
\begin{align}
\Phi_{10}(U^mV^n) &= e(\tfrac\theta6 (m-n)^2) \delta_3^{m-n}\\ 
\Phi_{11}(U^mV^n) &= e(\tfrac\theta6 (m-n)^2) \delta_3^{m-n-1}\\ 
\Phi_{12}(U^mV^n) &= e(\tfrac\theta6 (m-n)^2) \delta_3^{m-n-2}.
\end{align}
Further, the canonical trace on the cubic crossed product $A_\theta \rtimes \Bbb Z_3$ is, as before, $\tau(x_0 + x_1Y + x_2Y^2) = \tau(x_0).$ Observe that the $\Phi$ and $\Psi$ maps are related by 
\[
\Phi_{10} = \Psi_{20}, \qquad \Phi_{10} + \Phi_{11} + \Phi_{12} = \Psi_{21}.
\]

From the character table on page 37 of \ccite{BW}, the range of $\T_3$ has the following vectors as a $\mathbb Z$-basis 
\begin{align}\label{lambdas}
\lambda_1 &= (1; \ 0, 0, 0)	 \notag \\
\lambda_2 &= (\tfrac13; \ \tfrac13, 0, 0 )	\notag \\
\lambda_3 &= (\tfrac13; \ -\tfrac\omega3, 0, 0 )	\notag \\
\lambda_4 &= (\tfrac13; \ 0, \tfrac13, 0 ) \\
\lambda_5 &= (\tfrac13; \ 0, -\tfrac\omega3, 0 )	\notag \\
\lambda_6 &= (\tfrac13; \ 0, 0, \tfrac13 )	\notag \\
\lambda_7 &= (\tfrac13; \ 0, 0, -\tfrac\omega3)	\notag \\
\lambda_8 &= (\tfrac\theta3; \ \tfrac{1+\omega}{9}, \tfrac{1+\omega}{9}, \tfrac{1+\omega}{9}   ) \notag
\end{align}
where $\omega := e(\tfrac16) = \tfrac12(1+i\sqrt3)$ as before.

Since the maps $H_{10}, H_{30}, H_{31}$ are clearly zero on the range of the inclusion $\kappa$, it follows that the induced $K_0$ map $\kappa_*$ on all the basis elements $\lambda_1, \dots, \lambda_8$ of $K_0(A_\theta \rtimes \Bbb Z_3)$ has 0's in the $H_{10}, H_{30}, H_{31}$ components. Further, from 
\[
H_{20} \kappa  = S_{10}, \qquad
H_{21} \kappa = S_{10} + S_{11} + S_{12},  
\]
we obtain (for any $K$-class $\xi$)
\begin{align*}
\T_6 \kappa_*(\xi) &= (\tau(\xi); 0; H_{20}(\kappa \xi), H_{21}(\kappa \xi); 0, 0)	\\
&= (\tau(\xi); 0;\ S_{10}(\xi), (S_{10} + S_{11} + S_{12})(\xi) ;\ 0, 0).
\end{align*}
From this we get the image of the generators $\lambda_j$ in the span of the group generated by the $\mu_k$'s as follows:
\begin{align*}
\lambda_1' := \T_6 \kappa_*(\lambda_1) &= (1; 0; 0, 0; 0, 0) \ = \ \mu_1\\
\lambda_2' := \T_6 \kappa_*(\lambda_2) &= (\tfrac13;\ 0;\ \tfrac13, \tfrac13;\ 0, 0) \ = \ \mu_2 + \mu_5 \\
\lambda_3' := \T_6 \kappa_*(\lambda_3) &= (\tfrac13; 0;\ -\tfrac\omega3, -\tfrac\omega3;\ 0, 0) \ = \ \mu_3 + \mu_6 \\
\lambda_4' := \T_6 \kappa_*(\lambda_4) &= (\tfrac13; 0;\ 0, \tfrac13;\ 0, 0) \ = \ \mu_7 \\
\lambda_5' := \T_6 \kappa_*(\lambda_5) &= (\tfrac13; 0;\ 0, -\tfrac\omega3;\ 0, 0) \ = \ \mu_8 \\
\lambda_6' := \T_6 \kappa_*(\lambda_6) &= (\tfrac13; 0;\ 0, \tfrac13;\ 0, 0) \ = \ \mu_7 \\
\lambda_7' := \T_6 \kappa_*(\lambda_7) &= (\tfrac13; 0;\ 0,-\tfrac\omega3;\ 0, 0) \ = \ \mu_8 \\
\lambda_8' := \T_6 \kappa_*(\lambda_8) &= (\tfrac\theta3; 0;\ \tfrac{1+\omega}9, \tfrac{1+\omega}3;\ 0, 0).
\end{align*}
Here we see that two pairs are the equal $\lambda_4' = \lambda_6' = \mu_7$ and $\lambda_5' = \lambda_7' = \mu_8$. Further, it can be checked that
\begin{equation}\label{lambda8}
\lambda_8'  =  \mu_3 + \mu_4 + \mu_5 - \mu_9 + 2\mu_{10}
\end{equation}
which means that the vectors
\begin{equation}\label{lambdabasis}
\lambda_1',\ \lambda_2',\ \lambda_3',\ \lambda_4', \ \lambda_5',\ \lambda_8'
\end{equation}
form a basis for the image of $\T_6\kappa_*$. (One can easily check that they are integrally independent.) Replacing $\mu_4$ using \eqref{lambda8}, $\mu_5 = \lambda_2' - \mu_2$, and $\mu_6 = \lambda_3' - \mu_3$, the 10 vectors 
\[
\mu_1 = \lambda_1', \quad \mu_2, \quad \mu_3, \quad \lambda_2', \quad \lambda_3', \quad \mu_7 = \lambda_4', \quad \mu_8 = \lambda_5', \quad \lambda_8', \quad
\mu_9, \quad \mu_{10}
\]
constitute a basis for $K_0(A_\theta \rtimes \Bbb Z_6)$ containing the basis \eqref{lambdabasis} for the image of $\T_6\kappa_*$. Therefore, it follows that the image of $\kappa_*$ is a direct summand of $K_0(A_\theta \rtimes \Bbb Z_6) = \mathbb Z^{10}$ isomorphic to $\mathbb Z^6$, and its kernel is
\[
\ker(\kappa_*) = \mathbb Z(\lambda_4-\lambda_6) + \mathbb Z(\lambda_5 -\lambda_7) \cong \mathbb Z^2.
\] 
This establishes the exact sequence in Theorem \ref{Kexact} related to $\kappa_*$. The Natsume sequence in this case then becomes
\[
\CD
\Bbb Z^{8} = K_0(A_\theta \rtimes \Bbb Z_3)
@> {\kappa_{1*} - \kappa_{2*}} >>
\Bbb Z^{10} \oplus \Bbb Z^{10}
@>{ j_{1*} + j_{2*} }>>
K_0(A_\theta \rtimes_r \Bbb Z_{6,6;3})
\\
@A  AA          @.              @VV  V
\\
K_1(A_\theta \rtimes \Bbb Z_{6,6;3})
@< {  } <<
\bold0 \oplus \bold0
@< {  } <<
\bold0
\endCD
\]
giving, again using Lemma \ref{summandlemma}, 
\[
K_0(A_\theta \rtimes \Bbb Z_{6,6;3}) = \mathbb Z^{10+10-6} = \mathbb Z^{14} , \qquad
K_1(A_\theta \rtimes \Bbb Z_{6,6;3}) = \mathbb Z^2.
\]
This proves the corresponding part of Theorem \ref{thmamalg}. 

\bigskip

We now check that the canonical map $i_*: K_0(A_\theta) \to K_0(A_\theta \rtimes \mathbb Z_3)$ is injective and its image is a direct summand. Recall, $K_0(A_\theta) = \mathbb Z^2$ is generated by the classes of the identity $1$ and a Rieffel projection $e_\theta$ of trace $\theta$. We have $\T_3 i_*[1] = (1;0,0,0)$ and for the Rieffel projection we have $\T_3 i_*[e_\theta] = (\theta;0,0,0)$ (since $e_\theta$ being in $A_\theta$ has zero $Y$ and $Y^2$ components). Therefore, in terms of the $\lambda$-basis listed in \eqref{lambdas},
\[
\T_3 i_*[1] = (1;0,0,0) = \lambda_1  
\]
and it is straightforward to verify that
\[
\T_3 i_*[e_\theta] = (\theta;0,0,0) = 3\lambda_8 - (\lambda_2 - \lambda_3) - (\lambda_4 - \lambda_5) - (\lambda_6 - \lambda_7).
\]
The latter can be used to eliminate $\lambda_2$ to get the basis
\[
\lambda_1=\T_3 i_*[1], \quad \T_3 i_*[e_\theta], \quad  
\lambda_3, \quad \lambda_4, \quad \lambda_5, \quad \lambda_6, \quad \lambda_7, \quad \lambda_8, \quad 
\]
so that $i_*$ maps $K_0(A_\theta)$ injectively onto a direct summand of $K_0(A_\theta \rtimes \Bbb Z_3)$. 
\medskip

\noindent{\bf Acknowledgements.} Thank you, Jesus!

\bigskip


\end{document}